\numberwithin{equation}{section}
\newtheorem{theorem}[equation]{Theorem}
\newtheorem{lemma}[equation]{Lemma}
\newtheorem{corollary}[equation]{Corollary}
\theoremstyle{definition}
\newtheorem{definition}[equation]{Definition}
\newtheorem{remark}[equation]{Remark}
\newtheorem{example}[equation]{Example}
\newcommand{\LE}{\hbox{\vbox{\hrule width 0.35em height 0.04 em}\vbox{\offinterlineskip\hbox{\kern -0.02em\vrule height 0.65em width 0.04em \hspace{0.1em}}}}}
\newcommand{\la}{\lambda}
\newcommand{\T}{\mathcal{T}}
\newcommand{\sym}{\mathfrak{S}}
\newcommand{\pt}{\mathcal{PT}}
\begin{document}
\title{On alternating signed permutations with the maximal number of fixed points}

\author{Kyoungsuk Park}
\address{Department of Mathematics \\ Ajou University \\ Suwon  443-749, Korea}
\email{bluemk00@ajou.ac.kr}

\thanks{This research was supported by Basic Science Research Program through the National Research Foundation of Korea(NRF) funded by the Ministry of Education(NRF2011-0012398).}

\keywords{signed permutation; alternating permutation; permutation tableaux of type $B$}

\subjclass[2010]{05A05; 05A15}

\begin{abstract}
  A conjecture by R. Stanley on a class of alternating permutations, which is proved by R. Chapman and L. Williams states that alternating permutations with the maximal number of fixed points is equidistributed with derangements. We extend this (type $A$) result to type $B$: We prove that various classes of alternating signed permutations with the maximal number of fixed points is equidistributed with certain types of derangements (of type $B$), respectively.
\end{abstract}

\maketitle

\section{Introduction}\label{intro}

Let $\sym_n$ be the symmetric group of all permutations of $[n]=\{1,2,\ldots,n\}$.
We let $\sigma=\sigma_1 \sigma_2 \cdots \sigma_n \in\sym_n$ denote the permutation with $\sigma(i)=\sigma_i$ for all $i\in[n]$.
A permutation $\sigma$ is \emph{alternating} if $\sigma_1>\sigma_2<\sigma_3>\sigma_4<\cdots\sigma_n$ and \emph{reverse alternating} if $\sigma_1<\sigma_2>\sigma_3<\sigma_4>\cdots\sigma_n$.
It is well-known that the number of alternating (and reverse alternating) permutations in $\sym_n$ is equal to the Euler number $E_n$.
Euler number $E_n$ counts many interesting objects; increasing binary trees, increasing 1-2 trees, simsun permutations, and orbits of the action of symmetric groups on the set of maximal chains in the poset of partitions. (See \cite{RS3}.)
Alternating permutations with the maximal number of fixed points form an interesting class in the set of alternating permutations, and they are considered in \cite{RS2}.
Let $d_k(n)$ and $d_k^\ast(n)$ be the number of alternating and reverse alternating permutations in $\sym_n$ with $k$ fixed points, respectively.
Then it is known (\cite{RS2}) that
\begin{equation}\label{eq1}
  \max\{k : d_{k}(n)\ne 0\} = \lceil n/2 \rceil, \,\,n \ge 4,
\end{equation}
\begin{equation}\label{eq2}
  \max\{k : d_{k}^{\ast}(n)\ne 0\} = \lceil (n+1)/2 \rceil, \,\,n \ge 5.
\end{equation}

R. Stanley made a conjecture on the equidistribution of the alternating (reverse alternating) permutations with the maximal number of fixed points and the derangements, respectively, whose proof is done by R. Chapman and L. Williams in \cite{RC-LW}:
$$d_{\lceil n/2\rceil}(n) = D_{\lfloor n/2 \rfloor} ,\,\, n \ge 4 $$
$$d^{\ast}_{\lceil (n+1)/2 \rceil}(n) = D_{\lfloor (n-1)/2 \rfloor} ,\,\,n \ge 5,$$
where $D_n$ is the number of derangements in $\sym_n$.
In particular, their proof used \emph{permutation tableaux} which are in naturally bijection with permutations and introduced by E. Steingr\'{i}msson and Williams in \cite{ES-LW}.

As a signed analog of alternating permutations, V. Arnold introduced a \emph{snake} in \cite{VA}, a signed permutation $\sigma=\sigma_1 \sigma_2 \cdots \sigma_n$ such that $0<\sigma_1>\sigma_2<\sigma_3>\sigma_4<\cdots\sigma_n$.
It is known by Springer that the number of snakes in $\sym_n^B$ equals to the \emph{Springer number} $S_n$ (\cite{TS}).
Springer number $S_n$ also counts many interesting objects (\cite{MJ});
cycle-alternating permutations, weighted Dyck or Mozkin paths, increasing trees, and increasing forests.
In this article, we first define possible types of alternating signed permutations including snakes and consider classes of alternating signed permutations with the maximal number of fixed points for each type.
We then extend the proof by Chapman and Williams to show that each class of alternating signed permutations with the maximal fixed points are in bijection with certain derangements of type $B$, using permutation tableaux of \emph{type $B$}.
The rest of this article is organized as follows:
In Section \ref{altb}, we introduce the notion of alternating \emph{signed} permutations and consider the possible maximal fixed points of them.
Permutation tableaux of type $B$ and their relation to signed permutations are explained in Section \ref{ptb}.
Finally, Section \ref{pr} is devoted to the proof of the main theorem.

\section{Alternating signed permutation}\label{altb}

In this section, we introduce necessary terms and notations for alternating signed permutations,
and state the main theorem of this article, whose proof is done in Section \ref{pr}.

A \emph{signed permutation $\sigma$ of length $n$} is a permutation on $[\pm n]=\{-n,\ldots,-1,1,\ldots,n\}$ satisfying $\sigma(-i)=-\sigma(i)$ for all $i\in[n]$.
We denote $\sym_{n}^{B}$ the group of signed permutations of length $n$ and consider $\sym_{n}$ as a subgroup of $\sym_{n}^{B}$.

Let $\sigma\in\sym_n^B$.
A positive integer $i$ is called a \emph{fixed point} of $\sigma$ if $\sigma(i)=i$.
We use a \emph{one line notation} for $\sigma$;
$\sigma=\sigma_1 \sigma_2 \cdots \sigma_n$ means that $\sigma(i)=\sigma_i$ for all $i\in[n]$.
We consider all possible types of alternating signed permutations as follows.

Note that a signed permutation of type $\mathrm{+DU}$ is a snake.
\begin{definition}
  Let $\sigma\in\sym_n^B$.
  Then a signed permutation $\sigma\in\sym_n^B$ is called an \emph{alternating signed permutation} if $\sigma_1>\sigma_2<\sigma_3>\sigma_4<\cdots$ or $\sigma_1<\sigma_2>\sigma_3<\sigma_4>\cdots$.
  We define four types of alternating signed permutations as follows;
  \begin{itemize}
    \item $\sigma$ is \emph{of type} $\mathrm{-DU}$ if $0>\sigma_1 >\sigma_2 <\sigma_3 >\cdots$,
    \item $\sigma$ is \emph{of type} $\mathrm{-UD}$ if $0>\sigma_1 <\sigma_2 >\sigma_3 <\cdots$,
    \item $\sigma$ is \emph{of type} $\mathrm{+DU}$ if $0<\sigma_1 >\sigma_2 <\sigma_3 >\cdots$, and
    \item $\sigma$ is \emph{of type} $\mathrm{+UD}$ if $0<\sigma_1 <\sigma_2 >\sigma_3 <\cdots$.
  \end{itemize}
\end{definition}

In the following theorem, we find the (possible) maximal number of fixed points for alternating signed permutations of each type.
\begin{theorem}\label{maxfxalt}
  Let $d_{k}^\mathrm{-DU}(n)$, $d_{k}^\mathrm{-UD}(n)$, $d_{k}^\mathrm{+DU}(n)$, and $d_{k}^\mathrm{+UD}(n)$ be the numbers of alternating signed permutations with $k$ fixed points of types $\mathrm{-DU}$, $\mathrm{-UD}$, $\mathrm{+DU}$, and $\mathrm{+DU}$, respectively.
  Then for $n\ge 1$,
    $$\max\{k:d_{k}^\mathrm{-DU}(n)\ne 0\} = \lceil (n-2)/2 \rceil,$$
    $$\max\{k:d_{k}^\mathrm{-UD}(n)\ne 0\} = \lceil (n-1)/2 \rceil,$$
    $$\max\{k:d_{k}^\mathrm{+DU}(n)\ne 0\} = \lceil n/2 \rceil,$$
    $$\max\{k:d_{k}^\mathrm{+UD}(n)\ne 0\} = \lceil (n+1)/2 \rceil.$$
\end{theorem}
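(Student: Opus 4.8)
The plan is to prove each of the four equalities by separately establishing an upper bound (no permutation of the given type has more fixed points than claimed) and a matching lower bound (an explicit permutation attaining it). The engine for every upper bound is a single elementary observation, which I would isolate as a lemma: if position $i$ is a descent, i.e.\ $\sigma_i>\sigma_{i+1}$, then $i$ and $i+1$ cannot both be fixed points, since $\sigma_i=i$ and $\sigma_{i+1}=i+1$ would force $\sigma_i=i<i+1=\sigma_{i+1}$, contradicting the descent. Thus any block of two consecutive positions straddling a descent carries at most one fixed point.

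For the upper bounds I would then partition $[n]$ into descent-straddling pairs together with a few leftover singletons, the partition depending on the type. For type $\mathrm{+DU}$ the descents sit at the odd positions, so grouping $[n]$ as $(1,2),(3,4),\dots$ (with a singleton $\{n\}$ when $n$ is odd) yields $\lceil n/2\rceil$ blocks, each contributing at most one fixed point. For type $\mathrm{+UD}$ the descents sit at the even positions, so I would peel off position $1$ as a singleton and group the rest as $(2,3),(4,5),\dots$; counting blocks together with the possible trailing singleton $\{n\}$ gives $\lceil (n+1)/2\rceil$. The two negative types require one extra ingredient, a sign-forcing step. In type $\mathrm{-DU}$ the constraint $0>\sigma_1>\sigma_2$ forces $\sigma_2<\sigma_1<0$, so both positions $1$ and $2$ fail to be fixed points and the pair $(1,2)$ contributes $0$; grouping the remaining positions $3,4,\dots$ into descent pairs then gives $\lceil (n-2)/2\rceil$. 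In type $\mathrm{-UD}$ only $\sigma_1<0$ is forced, so position $1$ alone is discarded and the pairs $(2,3),(4,5),\dots$ give $\lceil (n-1)/2\rceil$.

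For the lower bounds I would exhibit one alternating signed permutation of each type whose fixed points meet the bound, and the four examples can all be written as uniform sign patterns on the identity. Concretely I would take $\sigma_i=(-1)^{i+1}i$ for type $\mathrm{+DU}$ (fixed points at all odd positions); $\sigma_i=(-1)^{i}i$ for type $\mathrm{-UD}$ (fixed points at all even positions); the pattern $\sigma_1=-1$, $\sigma_i=-i$ for even $i$, and $\sigma_i=i$ for odd $i\ge 3$ for type $\mathrm{-DU}$; and $\sigma_1=1$, $\sigma_i=i$ for even $i$, and $\sigma_i=-i$ for odd $i\ge 3$ for type $\mathrm{+UD}$. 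In each case one checks directly that the absolute values form a permutation of $[n]$, that the prescribed signs make $\sigma$ alternating of exactly the stated type, and that the positions left positive are precisely the fixed points, whose number equals the claimed ceiling in both parities of $n$.

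The descent-pair lemma is immediate and the constructions are transparent, so the real work is bookkeeping, and that is where I expect the only genuine difficulty. One must track, for each of the four types and for each parity of $n$, exactly which positions are leftover singletons and whether the sign constraint eliminates one or two initial positions, so that the block count collapses to the correct one of the four ceilings $\lceil (n-2)/2\rceil,\ \lceil (n-1)/2\rceil,\ \lceil n/2\rceil,\ \lceil (n+1)/2\rceil$. Keeping the even/odd case distinctions aligned with the sign-forcing is the delicate point; I would also verify the small cases $n=1,2$ by hand to confirm that the ceilings behave correctly at the boundary.
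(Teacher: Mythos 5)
Your proposal is correct and follows essentially the same approach as the paper: the upper bound via the observation that a descent pair $\sigma_i>\sigma_{i+1}$ carries at most one fixed point together with the sign-forcing on the initial positions, and the lower bound via an explicit alternating-sign modification of the identity (your type $\mathrm{-DU}$ witness is exactly the paper's). The only difference is that the paper writes out the argument for type $\mathrm{-DU}$ alone and asserts the other three types are analogous, whereas you carry out the bookkeeping for all four.
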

\begin{proof}
  We give a proof only for the type $\mathrm{-DU}$, since the same line of proof works for other types also.
  Let $\sigma\in\sym_{n}^{B}$ be of type $\mathrm{-DU}$.
  Neither 1 nor 2 can be a fixed point of $\sigma$, because $0>\sigma_1 >\sigma_2 $.
  For $i\ge2$, $\sigma$ has at most one fixed point among $2i-1$ and $2i$ since $\sigma_{2i-1}>\sigma_{2i}$.
  Hence, $\sigma$ has at most $(n-2)/2$ fixed points if $n$ is even, and $(n-1)/2$ fixed points if $n$ is odd.

  If we let $\sigma_1=-1$, $\sigma_2=-2$ and $\sigma_{2j-1}=2j-1$, $\sigma_{2j}=-2j$ for $j\ge 2$, then $\sigma$ is an alternating signed permutation of type $\mathrm{-DU}$ with $\lceil{(n-2)/2}\rceil$ fixed points, and the proof is completed.
\end{proof}

\begin{remark}\label{rmkpm}
  An alternating permutation (reverse alternating permutation) in $\sym_n$ is an alternating signed permutation of type $\mathrm{+DU}$ (or type $\mathrm{+UD}$, respectively) in $\sym_n^B$.
  Because of Equation \ref{eq1}, Equation \ref{eq2}, and Theorem \ref{maxfxalt}, any alternating permutation with maximal number of fixed points in $\sym_n$ is an alternating signed permutation of type $\mathrm{+DU}$ with maximal number of fixed points in $\sym_n^B$.
\end{remark}

The following corollary is from Theorem \ref{maxfxalt}.
\begin{corollary}\label{maxcor}
  Let $\sigma\in\sym_n^B$ be an alternating signed permutation with the maximal number of fixed points.
  Then, there does not exist $i\in[n-1]$ satisfying both $\sigma(i)>i$ and $\sigma(i+1)>i+1$.
  Moreover, if $\sigma(i)>i$ for $i>1$, then $\sigma(i-1)<\sigma(i)>\sigma(i+1)$.
\end{corollary}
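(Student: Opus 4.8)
The plan is to reduce both assertions to the single statement that $\sigma$ has no two consecutive excedances, i.e.\ that there is no $i\in[n-1]$ with $\sigma(i)>i$ and $\sigma(i+1)>i+1$ (this is exactly the first assertion), and then to read off the second assertion from it together with the alternating property. The main tool is the block partition of $[n]$ underlying the proof of Theorem \ref{maxfxalt}: for each of the four types one groups the positions into the disjoint \emph{descent pairs} $\{j,j+1\}$ (those $j$ at which $\sigma_j>\sigma_{j+1}$ is forced, namely $j$ odd for the $\mathrm{DU}$ types and $j$ even for the $\mathrm{UD}$ types) together with the leftover boundary singletons, and the proof shows that the maximal number of fixed points equals the sum of the capacities of these blocks, each descent pair having capacity one. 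First I would record the consequence that, since the blocks are disjoint and each carries at most its capacity, an alternating signed permutation attains the maximum only if \emph{every} block is saturated; in particular every descent pair of capacity one then contains exactly one fixed point, and every fixable boundary singleton is itself a fixed point.

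With this in hand I would prove the first assertion by contradiction. Suppose $\sigma(i)>i$ and $\sigma(i+1)>i+1$. If $\sigma_i>\sigma_{i+1}$, then $\{i,i+1\}$ is one of the covering descent pairs, and since neither $i$ nor $i+1$ is fixed this block is unsaturated, contradicting maximality (for type $\mathrm{-DU}$ the exceptional block $\{1,2\}$ has capacity zero, but it cannot contain an excedance since $\sigma_1,\sigma_2<0$). If instead $\sigma_i<\sigma_{i+1}$, then position $i$ is not a peak, so for $1<i$ it is a valley, whence $\sigma_{i-1}>\sigma_i$; then $\{i-1,i\}$ is a covering descent pair whose saturation forces $\sigma_{i-1}=i-1$, incompatible with $\sigma_{i-1}>\sigma_i>i>i-1$. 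The only remaining case is $i=1$ with $\sigma_1<\sigma_2$, which forces $\sigma$ to be of type $\mathrm{-UD}$ or $\mathrm{+UD}$; here $\{1\}$ is a block, and maximality forces $\sigma_1=1$ in the $\mathrm{+UD}$ case while $\sigma_1<0<1$ in the $\mathrm{-UD}$ case, either way contradicting $\sigma_1>1$. This settles every case.

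Finally I would deduce the second assertion. Assume $\sigma(i)>i$ with $i>1$. Were $\sigma_{i-1}>\sigma_i$, then from $\sigma_{i-1}>\sigma_i>i>i-1$ we would get $\sigma(i-1)>i-1$, i.e.\ consecutive excedances at $i-1$ and $i$, contradicting the first assertion; hence $\sigma_{i-1}<\sigma_i$. Since $\sigma$ is alternating, an interior position cannot satisfy $\sigma_{i-1}<\sigma_i<\sigma_{i+1}$, so $\sigma_i>\sigma_{i+1}$, giving $\sigma(i-1)<\sigma(i)>\sigma(i+1)$ (when $i=n$ the conclusion is read as $\sigma(i-1)<\sigma(i)$, which is already established). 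I expect the main obstacle to be the very first reduction: making ``maximality saturates every block'' precise and uniform across all four types and both parities of $n$, in particular correctly identifying which boundary singletons are fixable, although this is a routine refinement of the counting already carried out in the proof of Theorem \ref{maxfxalt}.
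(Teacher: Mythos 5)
Your proposal is correct, and it rests on the same underlying fact as the paper's proof: both arguments cash in the descent-pair counting behind Theorem \ref{maxfxalt}, both split on whether $\sigma_i>\sigma_{i+1}$ or $\sigma_i<\sigma_{i+1}$ at the putative pair of excedances, and both deduce the second assertion from the first together with the alternating property. The difference is in how the counting is invoked. The paper never names the blocks: in the descent case it uses alternation to get $\sigma(i+2)>\sigma(i+1)\ge i+2$, and in the ascent case $\sigma(i-1)>\sigma(i)\ge i+1$, so that in either case there are \emph{three consecutive non-fixed positions}, and it then asserts that this contradicts Theorem \ref{maxfxalt}; the advantage of this ``window of three'' device is that any three consecutive positions contain a whole descent pair regardless of parity, so no bookkeeping about which pairs form blocks is required. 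You instead make the block decomposition and the saturation principle explicit and point at the violated block directly: the pair $\{i,i+1\}$ itself in the descent case, and $\{i-1,i\}$ (whose saturation would force $\sigma(i-1)=i-1$) in the ascent case. Your version is more careful at the boundary than the paper's: you treat $i=1$ explicitly (the paper's ascent case, read literally, refers to a nonexistent position $0$ there), and you account for the capacity-zero block $\{1,2\}$ of type $\mathrm{-DU}$, where a window of three non-fixed positions by itself does \emph{not} contradict maximality and one must instead observe, as you do, that $\sigma_1,\sigma_2<0$ makes excedances there impossible; you also write out the deduction of the second assertion, which the paper dismisses as ``easy to check.'' The price of your route is the obligation you yourself flag: the saturation statement (every capacity-one block of the decomposition contains exactly one fixed point, and fixable singletons are fixed) must be verified uniformly over the four types and both parities of $n$, a routine but necessary refinement of the count in Theorem \ref{maxfxalt}; once that is done, your argument is complete and, if anything, tighter than the published one.
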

\begin{proof}
  Suppose that there exists $i\in[n-1]$ satisfying $\sigma(i)>i$ and $\sigma(i+1)>i+1$, that is, $\sigma(i)\ge i+1$ and $\sigma(i+1)\ge i+2$.
  If $\sigma(i)>\sigma(i+1)$, then $\sigma(i+2)>\sigma(i+1)\ge i+2$ hence $\sigma(i+2)>i+2$.
  Thus, there is no fixed point among $i$, $i+1$, and $i+2$.
  So, it is from Theorem \ref{maxfxalt} that $\sigma$ can not have the maximal number of fixed points.
  Similarly, if $\sigma(i)<\sigma(i+1)$, then there is no fixed point among $i-1$, $i$, and $i+1$.
  Therefore, $i\in[n-1]$ satisfying both $\sigma(i)>i$ and $\sigma(i+1)>i+1$ does not exist.

  It is easy to check that for $i>1$, if $\sigma(i)>i$ then $\sigma(i-1)<\sigma(i)>\sigma(i+1)$.
\end{proof}

A signed permutation $\sigma\in\sym_{n}^{B}$ is called a \emph{derangement of type $B$} if $\sigma$ has no fixed point.
The set of derangements of type $B$ in $\sym_{n}^{B}$ is denoted by $\mathfrak{D}_{n}^{B}$.
Our main theorem states that the alternating signed permutations of each type with the maximal number of fixed points are equidistributed with certain kind of derangements of type $B$. Note that, alternating signed permutations are in bijection with ``special" kinds of derangements.

\begin{theorem}\label{altder}
  Let $D_{n}^{B}$ be the number of derangements of type $B$ in $\sym_{n}^{B}$, $D_{n}^{-}=|\{\sigma\in\mathfrak{D}_{n}^{B}\,:\,\sigma(1)<0\}|$, and
  $D_{n}^{-D}=|\{\sigma\in\mathfrak{D}_{n}^{B}\,:\,0>\sigma(1)>\sigma(2)\}|$.
  Then for $n\ge 1 $,
  $$d_{\lceil (n-2)/2 \rceil}^\mathrm{-DU}(n)= D_{\lfloor (n+2)/2 \rfloor}^{-D},$$
  $$d_{\lceil (n-1)/2 \rceil}^\mathrm{-UD}(n)= D_{\lfloor (n+1)/2 \rfloor}^{-},$$
  $$d_{\lceil n/2 \rceil}^\mathrm{+DU}(n)= D_{\lfloor n/2 \rfloor}^{B},\mbox{ and}$$
  $$d_{\lceil (n+1)/2 \rceil}^\mathrm{+UD}(n)= D_{\lfloor (n-1)/2 \rfloor}^{B}.$$
\end{theorem}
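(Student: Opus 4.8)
The plan is to transport everything to permutation tableaux of type $B$ via the bijection of Section \ref{ptb}, which I denote $\Phi\colon\sym_n^B\to\pt_n^B$, and then run the Chapman--Williams argument in this signed setting. The first task is to record, for a tableau $T=\Phi(\sigma)$, which features of $T$ encode the data appearing in Theorem \ref{altder}: the fixed points of $\sigma$, the weak-excedance pattern (the positions $i$ with $\sigma(i)\ge i$), and the sign of the first entry $\sigma(1)$. Under the type-$B$ bijection the weak excedances are read off from the rows of $T$, the fixed points correspond to a distinguished family of \emph{free} rows, and the sign of $\sigma(1)$ is carried by the diagonal decoration of the first row. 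Establishing this dictionary precisely is the content of the preparatory lemmas I would prove first.

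Next I would use Corollary \ref{maxcor} to pin down the shape of $\Phi(\sigma)$ when $\sigma$ is an alternating signed permutation with the maximal number of fixed points. The corollary says that no two consecutive positions can both satisfy $\sigma(i)>i$, and that every such position at an interior index is an isolated peak; translated through the dictionary above, this forces the Young diagram underlying $\Phi(\sigma)$ to be a thin near-staircase, with one forced empty (zero) row for each fixed point. I would then show, conversely, that every tableau of this staircase shape arises from such a $\sigma$, so that the maximal-fixed-point alternating permutations of a given type are exactly the tableaux whose shape and boundary decoration are the prescribed ones.

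The heart of the argument is a \emph{stripping} map: delete from $\Phi(\sigma)$ the forced staircase cells determined by the fixed points, leaving a smaller type-$B$ permutation tableau $T'$. A direct count shows that the number of non-fixed positions of $\sigma$ equals $\lfloor(n+2)/2\rfloor$, $\lfloor(n+1)/2\rfloor$, $\lfloor n/2\rfloor$, and $\lfloor(n-1)/2\rfloor$ in the four respective types, which is exactly the length prescribed on the right-hand side of Theorem \ref{altder}. I would prove that $T'$ ranges over all type-$B$ permutation tableaux of that length and that its image under the inverse bijection is a derangement of type $B$: no fixed point survives, precisely because the zigzag rigidity of Corollary \ref{maxcor} leaves no room for a weak excedance outside the stripped staircase. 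Finally, the leading sign/descent constraint of the type label is inherited by $T'$; the ``$-$'' types force $\sigma(1)<0$ and hence land in $D^{-}$, the extra initial descent of type $\mathrm{-DU}$ forces $0>\sigma(1)>\sigma(2)$ and hence $D^{-D}$, whereas the ``$+$'' types impose no boundary condition and so give all of $D^{B}$.

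I expect the main obstacle to be the second step together with the bookkeeping in the third: showing that the forcing is both necessary and sufficient, so that the stripping map is a genuine bijection rather than merely an injection, and checking that stripping the staircase creates no new fixed point while transmitting the first-entry sign and descent data exactly onto the correct derangement subclass. Parity of $n$ and the small cases $n\le 3$ will have to be handled separately, since the staircase degenerates there.
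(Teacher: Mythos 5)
Your proposal follows essentially the same route as the paper: both transport the problem through the zigzag bijection $\zeta$ to permutation tableaux of type $B$, identify fixed points with empty rows (and the sign/descent conditions on $\sigma(1),\sigma(2)$ with diagonal entries of the negatively labeled rows), and realize the equidistribution by deleting the empty rows to reach a derangement tableau, with bijectivity coming from the fact that the positions and lengths of the inserted empty rows are forced by the alternating condition. Your ``stripping'' map is exactly the paper's $\Psi^{\ast}$, and your forced ``near-staircase'' insertion is exactly the paper's algorithm for $\Theta^{\ast}$, so the plan is correct and matches the paper's proof.
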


We prove Theorem \ref{altder} using permutation tableaux of type $B$ in Section \ref{pr}.

\section{Permutation tableaux of type $B$}\label{ptb}

We introduce permutation tableaux of type $B$ and properties of them associated with signed permutations in this section.

For two positive integers $r\le n$, an \emph{$(r,n)$-diagram} is a left-justified array of boxes in an $r\times (n-r)$ rectangle with $\la_i$ boxes in the $i$th row, where $\la_1 \ge \la_2 \ge \cdots \ge \la_r \ge 0$.
A \emph{shifted $(r,n)$-diagram} is an $(r,n)$-diagram with the stair-shaped array of boxes added above, where the $j$th column from the left has $(n-r-j+1)$ additional boxes for $j\in[n-r]$.
We call the unique $(r,n)$-diagram in a shifted $(r,n)$-diagram the \emph{$(r,n)$-subdiagram}, and the $(n-r)$ topmost boxes in a shifted $(r,n)$-diagram \emph{diagonals}.

Rows and columns of a shifted $(r,n)$-diagram are labeled as follows:
From the northeast corner to the southwest corner of the $(r,n)$-subdiagram, follow the southeast border edges of the subdiagram and give labels $1,2,\ldots,n$ in order.
If a vertical edge earned the label $i$, then the corresponding to row is named as $\verb"row"\,i$ and labeled by $i$ on the left side of $\verb"row"\,i$,
and if a horizontal edge earned the label $j$, then the corresponding to column is named as $\verb"col"\,j$ and labeled by $j$ on the top of $\verb"col"\,j$, however, we usually omit the column labels.
For the remaining rows,
if the diagonal in a row is in $\verb"col"\,j$, then the row is named as $\verb"row"\,(-j)$ and labeled by $-j$ on the left side of $\verb"row"\,(-j)$.
See Figure \ref{ptb8}.
The \emph{length} of a row is defined as the number of boxes in the row.

A \emph{permutation tableau of type $B$} is a $(0,1)$-filling of a shifted $(r,n)$-diagram that satisfies the following conditions:
\begin{enumerate}
  \item Every column has at least one box with a 1.
  \item If a box has a 1 above it in the same column and a 1 to the left in the same row, then it has a 1. (This is called $\LE$-condition.)
  \item If a diagonal has a 0, then there is no 1 in the same row.
\end{enumerate}
We remark that permutation tableaux of type $A$ are permutation tableaux of type $B$ such that the diagonals are filled with all 0's.

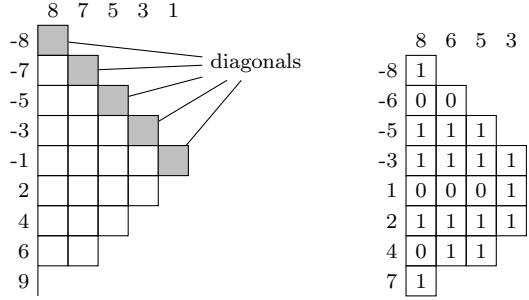
\begin{figure}[!ht]
  \begin{tikzpicture}\tiny
    \tikzstyle{Element} = [draw, minimum width=4mm, minimum height=4mm, node distance=4mm, inner sep=0pt]
      \node at (5.0,6.4) {8};
      \node at (5.4,6.4) {7};
      \node at (5.8,6.4) {5};
      \node at (6.2,6.4) {3};
      \node at (6.6,6.4) {1};
      \node (d) at (7.7,5.7) {diagonals};
      \node (d1) [Element,fill=lightgray] [label=left:-8] at (5.0,6.0) {};
      \node [Element] [label=left:-7] at (5.0,5.6) {};
      \node [Element] [label=left:-5] at (5.0,5.2) {};
      \node [Element] [label=left:-3] at (5.0,4.8) {};
      \node [Element] [label=left:-1] at (5.0,4.4) {};
      \node [Element] [label=left:2] at (5.0,4.0) {};
      \node [Element] [label=left:4] at (5.0,3.6) {};
      \node [Element] [label=left:6] at (5.0,3.2) {};
      \draw[-] (4.8,3.0) to (4.8,2.6);
      \node at (4.61,2.8) {9};

      \node (d2) [Element,fill=lightgray] at (5.4,5.6) {};
      \node [Element] at (5.4,5.2) {};
      \node [Element] at (5.4,4.8) {};
      \node [Element] at (5.4,4.4) {};
      \node [Element] at (5.4,4.0) {};
      \node [Element] at (5.4,3.6) {};
      \node [Element] at (5.4,3.2) {};

      \node (d3) [Element,fill=lightgray] at (5.8,5.2) {};
      \node [Element] at (5.8,4.8) {};
      \node [Element] at (5.8,4.4) {};
      \node [Element] at (5.8,4.0) {};
      \node [Element] at (5.8,3.6) {};

      \node (d4) [Element,fill=lightgray] at (6.2,4.8) {};
      \node [Element] at (6.2,4.4) {};
      \node [Element] at (6.2,4.0) {};

      \node (d5) [Element,fill=lightgray] at (6.6,4.4) {};

      \draw [-] (d) to (d1);
      \draw [-] (d) to (d2);
      \draw [-] (d) to (d3);
      \draw [-] (d) to (d4);
      \draw [-] (d) to (d5);
  \end{tikzpicture}\normalsize\quad\quad
  \begin{tikzpicture}\tiny
    \tikzstyle{Element} = [draw, minimum width=4mm, minimum height=4mm, node distance=4mm, inner sep=0pt]
      \node at (5,6) {8};
      \node at (5.4,6) {6};
      \node at (5.8,6) {5};
      \node at (6.2,6) {3};
      \node [Element] [label=left:-8] at (5.0,5.6) {1};
      \node [Element] [label=left:-6] at (5.0,5.2) {0};
      \node [Element] [label=left:-5] at (5.0,4.8) {1};
      \node [Element] [label=left:-3] at (5.0,4.4) {1};
      \node [Element] [label=left:1] at (5.0,4.0) {0};
      \node [Element] [label=left:2] at (5.0,3.6) {1};
      \node [Element] [label=left:4] at (5.0,3.2) {0};
      \node [Element] [label=left:7] at (5.0,2.8) {1};

      \node [Element] at (5.4,5.2) {0};
      \node [Element] at (5.4,4.8) {1};
      \node [Element] at (5.4,4.4) {1};
      \node [Element] at (5.4,4.0) {0};
      \node [Element] at (5.4,3.6) {1};
      \node [Element] at (5.4,3.2) {1};

      \node [Element] at (5.8,4.8) {1};
      \node [Element] at (5.8,4.4) {1};
      \node [Element] at (5.8,4.0) {0};
      \node [Element] at (5.8,3.6) {1};
      \node [Element] at (5.8,3.2) {1};

      \node [Element] at (6.2,4.4) {1};
      \node [Element] at (6.2,4.0) {1};
      \node [Element] at (6.2,3.6) {1};
  \end{tikzpicture}\normalsize
  \caption{A shifted $(4,9)$-diagram and a permutation tableau of type $B$.}\label{ptb8}
\end{figure}

The set of permutation tableaux of type $A$ and type $B$ of length $n$ are denoted by $\mathcal{PT}_{n}$ and $\mathcal{PT}_{n}^{B}$, respectively.
Steingr\'{i}msson and Williams defined a bijective zigzag map $\Phi:\pt_{n}\rightarrow\sym_{n}$ in \cite{ES-LW} and it was extended to a bijective zigzag map $\zeta:\pt_{n}^{B}\rightarrow\sym_{n}^{B}$ by S. Corteel and J. Kim in \cite{SC-JK}.

We give a definition of the zigzag map $\zeta$ as it was introduced in \cite{SC-KP}; in which zigzag map in this form played important role.

For a permutation tableau $\T$ of type $B$,
a \emph{zigzag path from} $\verb"row"\,i$ (or $\verb"col"\,i$) in $\T$, is the path starting from the left of $\verb"row"\,i$ (or the top of $\verb"col"\,i$, respectively), moving east (or south, respectively) until it meets the east edge of a row or the south edge of a column where it changes the direction to either south or east whenever it meets a $1$. Then the \emph{zigzag map} $\zeta: \pt_{n}^{B}\rightarrow\sym_{n}^{B}$ is defined as follows: For $\T\in\pt_{n}^{B}$ and $i\in[n]$,
\begin{itemize}
  \item[(1)] if $i$ is the label of a row of $\T$, then $\zeta(\T)(i)$ is the label of the row or the column that the zigzag path from $\verb"row"\,i$ ends,
  \item[(2)] if $-i$ is the label of a row of $\T$ and $\verb"row"\,(-i)$ has a $0$ in its diagonal, then $\zeta(\T)(i)$ is the label of the row or the column that the zigzag path from $\verb"col"\,i$ ends, and
  \item[(3)] if $-i$ is the label of a row of $\T$ and $\verb"row"\,(-i)$ has a $1$ in its diagonal, then $\zeta(\T)(-i)$ is the label of the row or the column that the zigzag path from $\verb"row"\,(-i)$ ends.
\end{itemize}
Zigzag map $\zeta$ is described in Figure~\ref{zigzagex}.

\tiny
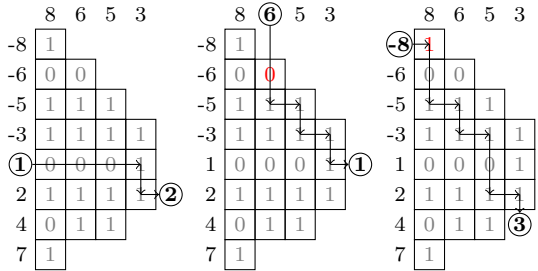
\begin{figure}[!ht]
  \begin{tikzpicture}
    \tikzstyle{Element} = [draw, minimum width=4mm, minimum height=4mm, node distance=4mm, inner sep=0pt]
    \tikzstyle{Vertex} = [draw,circle, minimum size=3mm, inner sep=0pt]
      \node at (5,6) {8};
      \node at (5.4,6) {6};
      \node at (5.8,6) {5};
      \node at (6.2,6) {3};
      \node [Element] [label=left:-8] at (5.0,5.6) {\textcolor{gray}{1}};
      \node [Element] [label=left:-6] at (5.0,5.2) {\textcolor{gray}{0}};
      \node [Element] [label=left:-5] at (5.0,4.8) {\textcolor{gray}{1}};
      \node [Element] [label=left:-3] at (5.0,4.4) {\textcolor{gray}{1}};
      \node [Element] at (5.0,4.0) {\textcolor{gray}{0}};
      \node [Element] [label=left:2] at (5.0,3.6) {\textcolor{gray}{1}};
      \node [Element] [label=left:4] at (5.0,3.2) {\textcolor{gray}{0}};
      \node [Element] [label=left:7] at (5.0,2.8) {\textcolor{gray}{1}};

      \node [Element] at (5.4,5.2) {\textcolor{gray}{0}};
      \node [Element] at (5.4,4.8) {\textcolor{gray}{1}};
      \node [Element] at (5.4,4.4) {\textcolor{gray}{1}};
      \node [Element] at (5.4,4.0) {\textcolor{gray}{0}};
      \node [Element] at (5.4,3.6) {\textcolor{gray}{1}};
      \node [Element] at (5.4,3.2) {\textcolor{gray}{1}};

      \node [Element] at (5.8,4.8) {\textcolor{gray}{1}};
      \node [Element] at (5.8,4.4) {\textcolor{gray}{1}};
      \node [Element] at (5.8,4.0) {\textcolor{gray}{0}};
      \node [Element] at (5.8,3.6) {\textcolor{gray}{1}};
      \node [Element] at (5.8,3.2) {\textcolor{gray}{1}};

      \node [Element] at (6.2,4.4) {\textcolor{gray}{1}};
      \node [Element] at (6.2,4.0) {\textcolor{gray}{1}};
      \node [Element] at (6.2,3.6) {\textcolor{gray}{1}};

      \node [Vertex] (0) at (4.6,4.0) {\textbf{1}};
      \node [Vertex] (1) at (6.6,3.6) {\textbf{2}};
      \path [->] (0) edge (6.2,4.0);
      \path [->] (6.2,4.0) edge (6.2,3.6);
      \path [->] (6.2,3.6) edge (1);
  \end{tikzpicture}
  \begin{tikzpicture}
    \tikzstyle{Element} = [draw, minimum width=4mm, minimum height=4mm, node distance=4mm, inner sep=0pt]
    \tikzstyle{Vertex} = [draw,circle, minimum size=3mm, inner sep=0pt]
      \node at (5,6) {8};
      \node at (5.8,6) {5};
      \node at (6.2,6) {3};
      \node [Element] [label=left:-8] at (5.0,5.6) {\textcolor{gray}{1}};
      \node [Element] [label=left:-6] at (5.0,5.2) {\textcolor{gray}{0}};
      \node [Element] [label=left:-5] at (5.0,4.8) {\textcolor{gray}{1}};
      \node [Element] [label=left:-3] at (5.0,4.4) {\textcolor{gray}{1}};
      \node [Element] [label=left:1] at (5.0,4.0) {\textcolor{gray}{0}};
      \node [Element] [label=left:2] at (5.0,3.6) {\textcolor{gray}{1}};
      \node [Element] [label=left:4] at (5.0,3.2) {\textcolor{gray}{0}};
      \node [Element] [label=left:7] at (5.0,2.8) {\textcolor{gray}{1}};

      \node [Element] at (5.4,5.2) {\textcolor{red}{0}};
      \node [Element] at (5.4,4.8) {\textcolor{gray}{1}};
      \node [Element] at (5.4,4.4) {\textcolor{gray}{1}};
      \node [Element] at (5.4,4.0) {\textcolor{gray}{0}};
      \node [Element] at (5.4,3.6) {\textcolor{gray}{1}};
      \node [Element] at (5.4,3.2) {\textcolor{gray}{1}};

      \node [Element] at (5.8,4.8) {\textcolor{gray}{1}};
      \node [Element] at (5.8,4.4) {\textcolor{gray}{1}};
      \node [Element] at (5.8,4.0) {\textcolor{gray}{0}};
      \node [Element] at (5.8,3.6) {\textcolor{gray}{1}};
      \node [Element] at (5.8,3.2) {\textcolor{gray}{1}};

      \node [Element] at (6.2,4.4) {\textcolor{gray}{1}};
      \node [Element] at (6.2,4.0) {\textcolor{gray}{1}};
      \node [Element] at (6.2,3.6) {\textcolor{gray}{1}};

      \node [Vertex] (0) at (5.4,6) {\textbf{6}};
      \node [Vertex] (1) at (6.6,4.0) {\textbf{1}};
      \path [->] (0) edge (5.4,4.8);
      \path [->] (5.4,4.8) edge (5.8,4.8);
      \path [->] (5.8,4.8) edge (5.8,4.4);
      \path [->] (5.8,4.4) edge (6.2,4.4);
      \path [->] (6.2,4.4) edge (6.2,4.0);
      \path [->] (6.2,4.0) edge (1);
  \end{tikzpicture}
  \begin{tikzpicture}
    \tikzstyle{Element} = [draw, minimum width=4mm, minimum height=4mm, node distance=4mm, inner sep=0pt]
    \tikzstyle{Vertex} = [draw,circle, minimum size=3mm, inner sep=0pt]
      \node at (5,6) {8};
      \node at (5.4,6) {6};
      \node at (5.8,6) {5};
      \node at (6.2,6) {3};
      \node [Element] at (5.0,5.6) {\textcolor{red}{1}};
      \node [Element] [label=left:-6] at (5.0,5.2) {\textcolor{gray}{0}};
      \node [Element] [label=left:-5] at (5.0,4.8) {\textcolor{gray}{1}};
      \node [Element] [label=left:-3] at (5.0,4.4) {\textcolor{gray}{1}};
      \node [Element] [label=left:1] at (5.0,4.0) {\textcolor{gray}{0}};
      \node [Element] [label=left:2] at (5.0,3.6) {\textcolor{gray}{1}};
      \node [Element] [label=left:4] at (5.0,3.2) {\textcolor{gray}{0}};
      \node [Element] [label=left:7] at (5.0,2.8) {\textcolor{gray}{1}};

      \node [Element] at (5.4,5.2) {\textcolor{gray}{0}};
      \node [Element] at (5.4,4.8) {\textcolor{gray}{1}};
      \node [Element] at (5.4,4.4) {\textcolor{gray}{1}};
      \node [Element] at (5.4,4.0) {\textcolor{gray}{0}};
      \node [Element] at (5.4,3.6) {\textcolor{gray}{1}};
      \node [Element] at (5.4,3.2) {\textcolor{gray}{1}};

      \node [Element] at (5.8,4.8) {\textcolor{gray}{1}};
      \node [Element] at (5.8,4.4) {\textcolor{gray}{1}};
      \node [Element] at (5.8,4.0) {\textcolor{gray}{0}};
      \node [Element] at (5.8,3.6) {\textcolor{gray}{1}};
      \node [Element] at (5.8,3.2) {\textcolor{gray}{1}};

      \node [Element] at (6.2,4.4) {\textcolor{gray}{1}};
      \node [Element] at (6.2,4.0) {\textcolor{gray}{1}};
      \node [Element] at (6.2,3.6) {\textcolor{gray}{1}};

      \node [Vertex] (0) at (4.6,5.6) {\textbf{-8}};
      \node [Vertex] (1) at (6.2,3.2) {\textbf{3}};
      \path [->] (0) edge (5.0,5.6);
      \path [->] (5.0,5.6) edge (5.0,4.8);
      \path [->] (5.0,4.8) edge (5.4,4.8);
      \path [->] (5.4,4.8) edge (5.4,4.4);
      \path [->] (5.4,4.4) edge (5.8,4.4);
      \path [->] (5.8,4.4) edge (5.8,3.6);
      \path [->] (5.8,3.6) edge (6.2,3.6);
      \path [->] (6.2,3.6) edge (1);
  \end{tikzpicture}
  \caption{Zigzag maps in permutation tableaux of type $B$.}\label{zigzagex}
\end{figure}\normalsize

For $\T\in\pt_{n}^{B}$, let the underlying shifted diagram of $\T$ be $(r,n)$-shifted diagram with shape $\la=(\la_1\ge\cdots\ge\la_r)$.
Let $\T^{+}$ be the sub-tableaux consisting of all positively labeled rows of $\T$ on the $(r,n)$-subdiagram with shape $\la$.
For $i\in[n]$, we call $\verb"row"\,i$ an \emph{empty row} if $\verb"row"\,i$ has only 0's in it.
The following lemma shows how some important statistics of $\sigma\in\sym_n^B$ can be recognized in the corresponding permutation tableau of type $B$.
\begin{lemma}\cite{ES-LW,SC-JK}\label{lem:bijec}
  Let $\T\in\pt_{n}^{B}$, $\sigma=\zeta(\T)$, and $lab(\T)=\{j\in[\pm n]\,:\,j\mbox{ is a row labeling of }\T\}$.
  Then, for $i\in[n]$,
  \begin{enumerate}
    \item $\sigma(i)\ge i$ if and only if $i\in lab(\T)$.
    \item $\sigma(i)=i$ if and only if $i\in lab(\T)$ and $\verb"row"\,i$ is empty.
    \item $\sigma(i)<i$ if and only if $(-i)\in lab(\T)$.
    \item $\sigma(i)<0$ if and only if $(-i)\in lab(\T)$ and $\verb"row"\,(-i)$ has a 1 in its diagonal.
  \end{enumerate}
\end{lemma}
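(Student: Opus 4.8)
The plan is to reduce all four statements to two monotonicity properties of zigzag paths, together with the elementary combinatorial fact that the positive row labels and the column labels of $\T$ partition $[n]$. Indeed, the southeast border of the $(r,n)$-subdiagram consists of $r$ vertical edges and $(n-r)$ horizontal edges carrying the labels $1,\dots,n$; the vertical edges give the positive row labels, the horizontal edges give the column labels, and each negatively labeled row $(-j)$ arises from a column label $j$. Hence for every $i\in[n]$ exactly one of $+i,-i$ is a row label, i.e. exactly one of ``$i\in lab(\T)$'' and ``$i$ is a column label'' holds. The two properties I would isolate (both implicit in \cite{ES-LW,SC-JK}) are: (A) a zigzag path entering from the left of row $i$ terminates at a label $\ge i$, with equality exactly when the path never turns south; and (B) a zigzag path entering from the top of column $i$ terminates at a positive label $<i$. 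Both hold because a path moves only east and south while the border labels increase monotonically from the northeast to the southwest corner, and because any terminal edge on the subdiagram's southeast border is labeled by a positive value. Verifying that every path is well defined (never gets stuck, always reaches a labeled edge, and exits where claimed) is exactly where the $\LE$-condition and the filling conditions are used, and this geometric input is the step I expect to be the main obstacle.

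Granting (A) and (B), I would first prove (1) and (3) simultaneously. If $i\in lab(\T)$, then $\sigma(i)$ is computed by case (1) of the definition of $\zeta$, so by (A) we get $\sigma(i)\ge i$. If instead $i$ is a column label, then $-i$ is a row label and $\sigma(i)$ is computed by case (2) or case (3): in case (2), property (B) gives $0<\sigma(i)<i$; in case (3), $\zeta(\T)(-i)$ is a positive terminal label, so $\sigma(i)=-\zeta(\T)(-i)<0<i$. In either case $\sigma(i)<i$. Since ``$\sigma(i)\ge i$'' and ``$\sigma(i)<i$'' are complementary, and ``$i\in lab(\T)$'' and ``$i$ is a column label'' are complementary, statements (1) and (3) follow at once.

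Next I would refine (1) to obtain (2). When $i\in lab(\T)$, property (A) shows the path from row $i$ ends at the label $i$ precisely when it never turns south, and it never turns south precisely when row $i$ contains no $1$, i.e. when row $i$ is empty; in that case the path runs straight east and exits at the east edge of row $i$, which carries the label $i$. If row $i$ contains a $1$, the path turns at the first such entry and exits at a label strictly greater than $i$, so $\sigma(i)>i$. Combined with the fact established in the previous paragraph that $\sigma(i)<i$ whenever $i\notin lab(\T)$, this yields $\sigma(i)=i$ if and only if $i\in lab(\T)$ and row $i$ is empty.

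Finally, for (4) I would track signs through the three cases of $\zeta$. Cases (1) and (2) assign to $\sigma(i)$ the label of a terminal edge on the southeast border, which is positive, so $\sigma(i)>0$ there. The only way to obtain a negative value is case (3), in which $-i$ is a row label whose diagonal carries a $1$; there $\zeta(\T)(-i)$ is a positive terminal label, so $\sigma(i)=-\zeta(\T)(-i)<0$. Hence $\sigma(i)<0$ if and only if $-i\in lab(\T)$ and row $(-i)$ has a $1$ in its diagonal, which is (4). As flagged above, the one genuinely geometric ingredient — that the three classes of zigzag paths terminate where claimed and with the claimed label inequalities — is the crux of the matter; the remainder is bookkeeping over the cases of $\zeta$ and the partition of $[n]$.
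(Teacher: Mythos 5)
A preliminary remark: the paper does not prove this lemma at all --- it is imported from \cite{ES-LW,SC-JK} --- so there is no internal argument to measure your proposal against; it has to stand on its own. Judged that way, your architecture is right: the observation that positive row labels and column labels partition $[n]$, the reduction of all four statements to your properties (A) and (B), and the bookkeeping over the three clauses of $\zeta$ (including the sign flip $\sigma(i)=-\zeta(\T)(-i)$ in clause (3)) are correct and complete, and would indeed yield (1)--(4).

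The genuine gap is the one you flagged yourself: (A) and (B) are asserted rather than proved, and the one-sentence justification you offer (east/south movement plus monotonicity of the border labels) is not sufficient for (B). Monotonicity shows only that a path launched from the top of $\col\,i$ terminates at a label $\le i$; the strict inequality that statement (3) of the lemma needs requires ruling out an exit at the south edge of $\col\,i$ itself, and the only thing that rules it out is condition (1) of the tableau definition (every column contains a 1), which forces the path to turn east out of $\col\,i$; after that turn it can never return, and every border edge it can still reach --- south edges of columns strictly to the right, or east edges of rows meeting a column of label $c\le i$ --- has label $<i$ by the border order. Without invoking condition (1) the claim is simply false: an all-zero column would make clause (2) of $\zeta$ produce $\sigma(i)=i$. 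Likewise, ``always reaches a labeled edge'' is not automatic: a path moving east in a negatively labeled row could a priori escape through that row's unlabeled east edge on the staircase boundary, and what prevents this is condition (3) (a 0 in a diagonal forces the whole row to be 0): any negative row in which the path travels eastward contains a 1, hence has a 1 in its diagonal, hence the path turns south at or before the diagonal. Note also that the $\LE$-condition, which you name as the hypothesis doing the work, is not what is needed for any of this; conditions (1) and (3) carry the argument, the $\LE$-condition being what makes $\zeta(\T)$ injective, which the lemma tacitly assumes. These gaps are fillable and your reduction would then go through verbatim, but as written the geometric core of the lemma is missing.
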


The following lemma can be proved by the same idea of the proof of Lemma 6 in \cite{RC-LW}.
\begin{lemma}\label{lem:RCLW}
  Let $\sigma$ be an alternating signed permutation.
  \begin{enumerate}
    \item The number of consecutive positive entries which are fixed points of $\sigma$ is at most 2.
    \item If two consecutive positive entries $i$ and $i+1$ are fixed points of $\sigma$, then $i$ is even.
  \end{enumerate}
\end{lemma}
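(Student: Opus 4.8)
The plan is to argue directly from the one-line inequalities defining an alternating signed permutation, reducing both statements to a single observation: a fixed point $i$ contributes the value $\sigma_i=i$, so a run of consecutive fixed points $i,i+1,\ldots,i+m$ produces the strictly increasing run $\sigma_i<\sigma_{i+1}<\cdots<\sigma_{i+m}$, that is, a string of consecutive \emph{ascents} at the positions $i,i+1,\ldots,i+m-1$. Everything then follows from how long such a string can be and where it can begin. The signed structure plays no role here, since fixed points are positive and the only comparisons used are the one-line inequalities among $\sigma_{i-1},\sigma_i,\sigma_{i+1}$; so I would treat all four types uniformly in part (1) and track only the up/down pattern in part (2).

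For part (1), I would suppose for contradiction that $i$, $i+1$, and $i+2$ are all fixed points. Then positions $i$ and $i+1$ are both ascents, since $\sigma_i=i<i+1=\sigma_{i+1}<i+2=\sigma_{i+2}$. But in any alternating signed permutation every interior position $j$ with $1<j<n$ is a strict local extremum: the defining condition forces either $\sigma_{j-1}>\sigma_j<\sigma_{j+1}$ or $\sigma_{j-1}<\sigma_j>\sigma_{j+1}$, so two consecutive ascents cannot occur. Applying this at $j=i+1$ (which is interior because $i+2\le n$) yields the contradiction $\sigma_i<\sigma_{i+1}<\sigma_{i+2}$. Hence the number of consecutive positive entries that are fixed points is at most $2$, and the argument is identical for both the down-up and up-down patterns and for all four sign types.

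For part (2), suppose $i$ and $i+1$ are both fixed points, so $\sigma_i=i<i+1=\sigma_{i+1}$ and position $i$ carries an ascent. The conclusion is then a parity reading of the alternating pattern: in a down-up permutation $\sigma_1>\sigma_2<\sigma_3>\cdots$ one has $\sigma_{2j-1}>\sigma_{2j}<\sigma_{2j+1}$, so ascents occur exactly at the even positions while the odd positions carry descents; hence an ascent at position $i$ forces $i$ to be even, and, consistently, a fixed-point pair can never begin at $i=1$, which is a descent. This parity step is the only place where the specific alternating pattern enters, and pinning down that the ascent positions are exactly the even ones from the inequality chain is the single point that needs care. I do not anticipate a deeper obstacle, since both parts are local statements about three consecutive one-line values, and the whole lemma is the signed counterpart of Lemma~6 of \cite{RC-LW}, proved by the same local reasoning.
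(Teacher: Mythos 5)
Your proof of part (1) is correct, and it is exactly the argument the paper has in mind --- the paper gives no proof of its own, only a pointer to Lemma 6 of Chapman--Williams, and that lemma is proved by precisely this local analysis: three consecutive fixed points $i,i+1,i+2$ would force $\sigma_i<\sigma_{i+1}<\sigma_{i+2}$, two consecutive ascents, which no alternating pattern admits at an interior position. This part is indeed type-independent, as you say.

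Part (2), however, has a genuine gap. You correctly reduce the claim to ``an ascent at position $i$ forces $i$ to be even,'' but you then verify the parity only for the down-up pattern $\sigma_1>\sigma_2<\sigma_3>\cdots$. The lemma as stated applies to \emph{all} alternating signed permutations, and for the up-down pattern $\sigma_1<\sigma_2>\sigma_3<\cdots$ (types $\mathrm{-UD}$ and $\mathrm{+UD}$) the ascents occupy the \emph{odd} positions, so your own reasoning forces $i$ to be odd there, not even. This is not an omission you can patch: for up-down types the statement is actually false. For instance $\sigma=1,2,-3\in\sym_3^{B}$ is alternating of type $\mathrm{+UD}$ and has the consecutive fixed points $1,2$ with $i=1$ odd; the paper's own example output of the Algorithm for $\Theta^{\mathrm{+UD}}$, namely $\sigma=1,2,-6,4,3,12,7,11,9,10,-8,15,13,14,5$, exhibits the same configuration. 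The honest conclusion of your method is a corrected lemma --- $i$ even for down-up types, $i$ odd for up-down types --- and that corrected form is all the paper ever uses, since the even-parity claim enters only the detailed proof for type $\mathrm{-DU}$, where descents sit at odd positions and your argument applies verbatim. As written, your proof establishes the lemma only for types $\mathrm{-DU}$ and $\mathrm{+DU}$ while presenting itself as covering all four types; to be complete it must split on the two patterns and record the two opposite parities (which also flags the imprecision in the paper's statement).
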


The following corollary is immediate from Corollary \ref{maxcor}, Lemma \ref{lem:bijec}, and Lemma \ref{lem:RCLW}.
\begin{corollary}\label{cor:RCLW}
  Let $\T$ be a permutation tableaux of type $B$ corresponding to an alternating signed permutation through $\zeta$. In $\T^{+}$,
  \begin{enumerate}
    \item the number of empty rows which have the same length is at most 2,
    \item if two consecutive rows $\verb"row"\,i$ and $\verb"row"\,(i+1)$ are empty, then $i$ is even,
    \item any two nonempty rows can not be consecutive.
  \end{enumerate}
\end{corollary}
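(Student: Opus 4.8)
The plan is to translate each assertion about the tableau $\T^{+}$ into a statement about the permutation $\sigma=\zeta(\T)$ using the dictionary of Lemma~\ref{lem:bijec}, and then quote Lemma~\ref{lem:RCLW} and Corollary~\ref{maxcor}. I take $\sigma$ to be an alternating signed permutation with the maximal number of fixed points, since the maximality is exactly what powers part (3). The dictionary reads as follows. By Lemma~\ref{lem:bijec}(1) the positive row labels of $\T$ are precisely the indices $i$ with $\sigma(i)\ge i$, and these index the rows of $\T^{+}$. By Lemma~\ref{lem:bijec}(2) such a row \verb"row"\,$i$ is empty exactly when $\sigma(i)=i$; hence the empty rows of $\T^{+}$ correspond bijectively to the positive fixed points of $\sigma$, and the nonempty rows correspond to the indices with $\sigma(i)>i$.

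The step I expect to demand the most care is purely geometric: I must relate the \emph{length} of a row of $\T^{+}$ to the \emph{consecutiveness} of row labels. The claim I would establish is that, among the positively labeled rows, two rows have the same length if and only if their labels are consecutive integers; more precisely, $i$ and $i+1$ are both positive row labels if and only if \verb"row"\,$i$ and \verb"row"\,$(i+1)$ are adjacent rows of equal length. This is forced by the labeling rule on the $(r,n)$-subdiagram: tracing the southeast border, two equal-length rows present vertically aligned right edges and are therefore crossed by two consecutive vertical border edges, receiving consecutive labels with no column label between them; unequal lengths instead force an intervening horizontal border edge, that is, a column label, so the two row labels differ by at least $2$. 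Consequently a maximal block of equal-length rows in $\T^{+}$ carries a block of consecutive integer labels, and it is exactly such blocks that the word ``consecutive'' refers to in the statement.

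With the dictionary and this length-label correspondence, parts (1)--(3) become direct substitutions. For (1), three empty rows of a common length would, by the correspondence and Lemma~\ref{lem:bijec}(2), give three consecutive positive integers $i,i+1,i+2$ that are all fixed points of $\sigma$, contradicting Lemma~\ref{lem:RCLW}(1); so at most two equal-length rows are empty. For (2), if \verb"row"\,$i$ and \verb"row"\,$(i+1)$ are both empty, then $i$ and $i+1$ are consecutive fixed points of $\sigma$, and Lemma~\ref{lem:RCLW}(2) forces $i$ to be even. For (3), if two consecutive rows \verb"row"\,$i$ and \verb"row"\,$(i+1)$ were both nonempty, then $\sigma(i)>i$ and $\sigma(i+1)>i+1$, which Corollary~\ref{maxcor} rules out for a permutation with the maximal number of fixed points. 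The only nonroutine ingredient is the geometric correspondence of the middle paragraph; granting it, (1)--(3) follow immediately from Lemma~\ref{lem:RCLW} and Corollary~\ref{maxcor}, as claimed.
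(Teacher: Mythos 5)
Your framing matches what the paper intends: the dictionary from Lemma~\ref{lem:bijec}, the observation that $i$ and $i+1$ are both positive row labels exactly when row~$i$ and row~$(i+1)$ are adjacent rows of equal length, and your decision to assume maximality of the number of fixed points (which the statement omits but Corollary~\ref{maxcor}, and hence part~(3), requires) are all correct, and your proofs of parts (2) and (3) are fine. The gap is in part (1), at the step ``three empty rows of a common length would \dots\ give three consecutive positive integers $i,i+1,i+2$ that are all fixed points.'' Your own geometric lemma does not deliver this: equal length places the three rows inside one block of consecutively labeled rows, but it does not make the three \emph{empty} rows adjacent to one another, since nonempty rows may be interleaved among them --- and, by part (3), such interleaving is exactly what happens in the maximal case. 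The inference from ``same length'' to ``consecutive labels'' is therefore invalid.

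In fact the literal statement you set out to prove is false, so no repair of that step is possible. Take $\sigma=3\,2\,5\,4\,8\,6\,7\,1\in\sym_8$, an alternating permutation of type $\mathrm{+DU}$ with fixed points $2,4,6,7$, i.e.\ with the maximal number $\lceil 8/2\rceil=4$ of fixed points. In $\T=\zeta^{-1}(\sigma)$ the only column is the one labeled $8$, so the rows $1,\dots,7$ of $\T^{+}$ all have length $1$, and four of them (rows $2,4,6,7$) are empty: four empty rows of the same length. (This $\T$ is precisely the output of the paper's Algorithm for $\Theta^{\mathrm{+DU}}$ applied to the derangement $2341$.) Consequently part (1) must be read in the form in which the paper actually uses it later (``has no three consecutive empty rows,'' and the insertion bounds in the algorithms): among \emph{consecutive} rows --- rows with consecutive integer labels, equivalently pairwise adjacent rows of equal length --- at most two can be empty. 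For that statement your method works verbatim and is the intended argument: three adjacent equal-length empty rows have labels $i,i+1,i+2$ by your geometric lemma, hence give three consecutive fixed points, contradicting Lemma~\ref{lem:RCLW}(1). As written, however, your proof asserts the stronger, false claim, and the quoted step is where it breaks.
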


The following lemmas are useful to prove Theorem \ref{altder} in Section \ref{pr}.
\begin{lemma}\label{consecrows}
  Let $\T\in\pt_n^B$, $\sigma=\zeta(\T)\in\sym_n^B$, and $i\in[n]$.
  \begin{enumerate}
    \item If $\T$ has two consecutive rows $\verb"row"\,i$ and $\verb"row"\,(i+1)$ (with the same length) such that $\verb"row"\,i$ is empty, then $\sigma(i)<\sigma(i+1)$.
    \item If $\T$ has two consecutive rows $\verb"row"\,i$ and $\verb"row"\,(i+1)$ (with the same length) such that $\verb"row"\,i$ is nonempty and $\verb"row"\,(i+1)$ is empty, then $\sigma(i)>\sigma(i+1)$.
    \item If $\T$ has two consecutive rows $\verb"row"\,i$ and $\verb"row"\,(i+2)$ (with lengths $j+1$ and $j$, respectively) such that $\verb"row"\,i$ is empty, then $\sigma(i)>\sigma(i+1)<\sigma(i+2)$.
  \end{enumerate}
\end{lemma}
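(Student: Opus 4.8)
The plan is to reduce all three parts to Lemma~\ref{lem:bijec} together with the injectivity of $\sigma$, once I have read off from the boundary labelling which of the positions $i,i+1,i+2$ carry positive row labels and which carry column labels. The key preliminary observation is purely geometric: along the southeast border of the $(r,n)$-subdiagram each row contributes exactly one vertical unit edge and each column exactly one horizontal unit edge, and these $n$ edges receive the labels $1,\dots,n$ in order. Hence two consecutive rows of the same length produce two consecutive vertical edges, so their labels are consecutive positive row labels $i,i+1$; whereas a drop of one box in row length inserts exactly one horizontal edge (a column bottom) between the two vertical edges. Thus in parts~(1) and~(2) both $i,i+1\in lab(\T)$, while the configuration of part~(3) forces $i,i+2\in lab(\T)$ and $i+1$ to be a column label, so that $-(i+1)\in lab(\T)$.

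For part~(1), row $i$ is empty, so $\sigma(i)=i$ by Lemma~\ref{lem:bijec}(2), while $i+1\in lab(\T)$ gives $\sigma(i+1)\ge i+1$ by Lemma~\ref{lem:bijec}(1); hence $\sigma(i)=i<i+1\le\sigma(i+1)$. For part~(2), row $(i+1)$ is empty, so $\sigma(i+1)=i+1$, and since $i\in lab(\T)$ with row $i$ nonempty, Lemma~\ref{lem:bijec}(1)--(2) give $\sigma(i)>i$, i.e.\ $\sigma(i)\ge i+1$; injectivity of $\sigma$ rules out $\sigma(i)=i+1=\sigma(i+1)$, so $\sigma(i)\ge i+2>\sigma(i+1)$.

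For part~(3), row $i$ empty gives $\sigma(i)=i$, and $-(i+1)\in lab(\T)$ gives $\sigma(i+1)<i+1$ by Lemma~\ref{lem:bijec}(3), i.e.\ $\sigma(i+1)\le i$; injectivity excludes $\sigma(i+1)=i=\sigma(i)$, so $\sigma(i+1)\le i-1<i=\sigma(i)$. Finally $i+2\in lab(\T)$ gives $\sigma(i+2)\ge i+2>i-1\ge\sigma(i+1)$. Combining, $\sigma(i)>\sigma(i+1)<\sigma(i+2)$, as required.

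The only genuine content is the first, geometric step: correctly matching the row-length data to the pattern of row and column labels, and in particular verifying in part~(3) that $i+1$ labels a column while the next row edge is $i+2$. Everything after that is a direct appeal to Lemma~\ref{lem:bijec} and to injectivity of $\sigma$. In principle one could instead trace the zigzag paths from row $i$, col $(i+1)$ and row $(i+2)$ explicitly, but routing through Lemma~\ref{lem:bijec} avoids that bookkeeping; I therefore expect no substantial obstacle beyond the careful boundary-label accounting.
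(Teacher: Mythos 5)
Your proof is correct and takes essentially the same route as the paper's: all three parts are reduced to Lemma~\ref{lem:bijec}, using $\sigma(i)=i$ for an empty labeled row, $\sigma(i)>i$ for a nonempty one, and $\sigma(i+1)<i+1$ when $i+1$ is a column label. Your write-up is in fact a bit more careful than the paper's, since you make explicit both the boundary-labeling bookkeeping and the injectivity step (e.g.\ ruling out $\sigma(i+1)=i=\sigma(i)$ in part~(3)) that the paper leaves implicit.
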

\begin{proof}
  We first assume that $\T$ has two consecutive rows $\verb"row"\,i$ and $\verb"row"\,(i+1)$ with the same length such that $\verb"row"\,i$ is empty.
  Then, $\sigma(i)=i$ and $\sigma(i+1)\ge i+1$ by Lemma \ref{lem:bijec}.
  Hence, $\sigma(i)<\sigma(i+1)$.
  For the second statement, we assume that $\T$ has two consecutive rows $\verb"row"\,i$ and $\verb"row"\,(i+1)$ with the same length such that $\verb"row"\,i$ is nonempty and $\verb"row"\,(i+1)$ is empty.
  Then, $\sigma(i)>i$ and $\sigma(i+1)=i+1$ by Lemma \ref{lem:bijec}.
  So, we have $\sigma(i)>i+1=\sigma(i+1)$.
  For the last statement, we suppose that $\T$ has two consecutive rows $\verb"row"\,i$ and $\verb"row"\,(i+2)$ with lengths $j+1$, $j$ such that $\verb"row"\,i$ is empty.
  Then, $\T$ must have $\verb"col"\,(i+1)$.
  Thus, by Lemma \ref{lem:bijec}, $\sigma(i)=i$, $\sigma(i+1)<i+1$, and $\sigma(i+2)\ge i+2$, hence $\sigma(i)>\sigma(i+1)<\sigma(i+2)$.
\end{proof}

\begin{lemma}\label{leftmostones}
  Let $\T\in\pt_n^B$ and $\sigma=\zeta(\T)$.
  Let $\verb"row"\,(-i)$ and $\verb"row"\,(-j)$ be two consecutive rows of $\T$ with 1's in their diagonals for $i>j>0$.
  If the leftmost 1 of $\verb"row"\,(-i)$ is in $\verb"col"\,k$ and the leftmost 1 of $\verb"row"\,(-j)$ is in $\verb"col"\,l$ for $k>l$,
  then $\sigma(i)<\sigma(j)<0$.
\end{lemma}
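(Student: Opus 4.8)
The plan is to read off the signs from Lemma \ref{lem:bijec} and then to compare magnitudes by tracing the two zigzag paths. Since \verb"row"\,(-i) and \verb"row"\,(-j) both carry a $1$ in their diagonals, Lemma \ref{lem:bijec}(4) gives at once $\sigma(i)<0$ and $\sigma(j)<0$, which is the ``$<0$'' part of the claim. Because $\sigma(-i)=-\sigma(i)$ and $\sigma(-j)=-\sigma(j)$, the remaining inequality $\sigma(i)<\sigma(j)$ is equivalent to $\sigma(-i)>\sigma(-j)$. By rule $(3)$ in the definition of $\zeta$, $\sigma(-i)$ and $\sigma(-j)$ are the labels at which the zigzag paths $P_i$ and $P_j$ starting from \verb"row"\,(-i) and \verb"row"\,(-j) leave the diagram, and these labels are positive by the signs just found. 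Recalling that the border labels increase from $1$ at the northeast corner to $n$ at the southwest corner, the inequality $\sigma(-i)>\sigma(-j)$ is exactly the assertion that $P_i$ exits strictly to the southwest of $P_j$; this is what I would prove.

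Both paths start at the left edge moving east, so each makes its first turn (to the south) at its leftmost $1$: $P_i$ at \verb"col"\,k and $P_j$ at \verb"col"\,l. As $k>l$, \verb"col"\,k lies strictly to the left of \verb"col"\,l. Here the hypothesis that the two rows are \emph{consecutive} is decisive: nothing lies between \verb"row"\,(-i) and \verb"row"\,(-j), so after turning south at the box of \verb"col"\,k in \verb"row"\,(-i) the path $P_i$ descends directly to the box $B$ of \verb"col"\,k in \verb"row"\,(-j). This box exists because $k>j$, and it carries a $0$ since \verb"col"\,k lies to the left of the leftmost $1$ of \verb"row"\,(-j). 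Hence at $B$ the two paths cross transversally, $P_i$ going south and $P_j$ going east, and immediately past $B$ the path $P_i$ sits strictly to the southwest of $P_j$.

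The heart of the argument is that $B$ is the \emph{only} crossing. Beyond $B$ every vertical segment of either path starts at a $1$ (south turns occur only at $1$'s), and every horizontal segment starts at a $1$ as well, with the single exception of the initial left-edge segment of $P_j$; but that segment lies in \verb"row"\,(-j), which $P_i$ has already left by descending below it at $B$, so $P_i$ can never meet it again. Consequently a hypothetical second transversal crossing would sit at a box with a $1$ immediately above it (where the crossing vertical segment turned south) and a $1$ immediately to its left (where the crossing horizontal segment turned east); by the \LE-condition that box would have to contain a $1$, contradicting that a transversal crossing occurs at a $0$. Therefore $P_i$ stays strictly southwest of $P_j$ until both exit, so $P_i$ leaves through a border edge southwest of that of $P_j$ and hence with a strictly larger label. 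This gives $\sigma(-i)>\sigma(-j)$, that is $\sigma(i)<\sigma(j)<0$. I expect the delicate point to be precisely this no-recrossing step: one must check that the single admissible crossing is the one at $B$ (admissible only because $P_j$ is still on its initial segment, where the \LE-condition imposes no constraint) and that the southwest relation is preserved all the way down to the exit edges.
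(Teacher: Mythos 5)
Your proposal is correct and follows essentially the same route as the paper: both identify the forced transversal crossing of the two zigzag paths at the $0$-box in row $(-j)$ and column $k$, rule out any further crossing via the $\LE$-condition, and conclude that the path from row $(-i)$ exits at a southeast border edge with strictly larger label, giving $\sigma(i)<\sigma(j)<0$. Your write-up is in fact more detailed than the paper's (which dismisses the crossing analysis as ``easy to see''), spelling out the reduction to $\sigma(-i)>\sigma(-j)$, the role of the consecutive-rows hypothesis, and the treatment of the initial segments; the only nitpick is that the turning $1$'s need not be \emph{immediately} above or to the left of a hypothetical second crossing, but the $\LE$-condition as stated requires only a $1$ somewhere above and somewhere to the left, so the argument stands.
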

\begin{proof}
  Let $\verb"row"\,(-i)$ and $\verb"row"\,(-j)$ be two consecutive rows of $\T$ with 1's in their diagonals and $i>j$, and the leftmost 1 of $\verb"row"\,(-i)$ is on the left of the leftmost 1 of $\verb"row"\,(-j)$.
  Then, $\sigma(i)<0$ and $\sigma(j)<0$ by Lemma \ref{lem:bijec}.
  It is easy to see that the two zigzag paths from $\verb"row"\,(-i)$ and $\verb"row"\,(-j)$ cross at the box in $\verb"row"\,(-j)$ and $\verb"col"\,k$, that is filled with 0, and they can not cross at another box filled with 0 because of the $\LE$-condition.
  This shows that the label of the southeast border edge that the zigzag path from $\verb"row"\,(-i)$ ends is greater than the one that the zigzag path from $\verb"row"\,(-j)$ ends.
  Therefore, $\sigma(i)<\sigma(j)<0$.
\end{proof}

\section{Proofs}\label{pr}

In this section, we prove the main theorem (Theorem \ref{altder}) using permutation tableaux of type $B$, extending the proof in \cite{RC-LW} for type $A$;
we give a detailed proof only for type $\mathrm{-DU}$, and only describe the bijective maps for other types.

We remark that the Algorithm for type $\mathrm{+DU}$ (or for type $\mathrm{+UD}$) restricted to the set of permutation tableaux of type $A$ induces the bijection from the set of derangements to the set of alternating (or reverse alternating, respectively) permutations with the maximal number of fixed points, which gives the proof by Chapman and Williams in \cite{RC-LW} for type $A$. (See Remark \ref{rmkpm}.)

Let $\mathcal{A}_{k}^\mathrm{-DU}(n)$, $\mathcal{A}_{k}^\mathrm{-UD}(n)$, $\mathcal{A}_{k}^\mathrm{+DU}(n)$, and $\mathcal{A}_{k}^\mathrm{+UD}(n)$ be the sets of permutation tableaux of type $B$ in $\pt_{n}^{B}$ corresponding to alternating signed permutations of types $\mathrm{-DU}$, $\mathrm{-UD}$, $\mathrm{+DU}$, and $\mathrm{+UD}$, respectively, with $k$ fixed points.
We also let
$$\mathcal{D}_n^B=\{\T\in\pt_n^B \,|\,\zeta(\T)\in\mathfrak{D}_n^B\},$$
$$\mathcal{D}_n^{-}=\{\T\in\mathcal{D}_n^B \,|\,\sigma(1)<0\mbox{ for }\sigma=\zeta(\T)\},$$
$$\mathcal{D}_n^{-D}=\{\T\in\mathcal{D}_n^{-}\,|\,\sigma(2)<\sigma(1)\mbox{ for }\sigma=\zeta(\T)\}.$$
Note that a permutation tableau $\T\in\mathcal{A}_{k}^{\ast}(n)$, for $\ast=\mathrm{-DU},\mathrm{-UD},\mathrm{+DU},\mathrm{+UD}$, has $k$ empty rows and has no three consecutive empty rows by Lemma \ref{lem:bijec} and Corollary \ref{cor:RCLW}.
Moreover, a permutation tableaux $\T\in\mathcal{D}_{n}^{B}$ has no empty row by Lemma \ref{lem:bijec}.

\subsection{Proof for type $\mathrm{-DU}$}

We first prove Theorem \ref{altder} for alternating signed permutations of type $\mathrm{-DU}$.
If $n$ is odd, then $n$ must be a fixed point. Hence we assume that $n$ is even for convention.
Let $n=2m-2$ and $\sigma\in\sym_n^B$ be an alternating signed permutation of type $\mathrm{-DU}$.
Then, by Theorem \ref{maxfxalt}, the maximal number of fixed points of $\sigma$ is $m-2$.

Let $\T\in\mathcal{A}_{k}^\mathrm{-DU}(n)$ or $\T\in\mathcal{D}_{n}^{-D}$.
Then, $0>\sigma(1)>\sigma(2)$ where $\sigma=\zeta(\T)$, and by Lemma \ref{lem:bijec} and Lemma \ref{leftmostones}, $\T$ has $\verb"row"\,(-1)$ and $\verb"row"\,(-2)$ with 1's in their diagonals, and the leftmost 1 of $\verb"row"\,(-2)$ is on the left of the leftmost 1 of $\verb"row"\,(-1)$.

We define a map $\Psi^\mathrm{-DU}:\mathcal{A}_{m-2}^\mathrm{-DU}(2m-2)\rightarrow\mathcal{D}_{m}^{-D}$.
For $\T\in\mathcal{A}_{m-2}^\mathrm{-DU}(2m-2)$, $\Psi^\mathrm{-DU}(\T)$ is obtained by deleting all empty rows from $\T$.
It is clear that $\Psi^\mathrm{-DU}(\T)\in\mathcal{D}_{m}^{B}$.
Moreover, since $\verb"row"\,(-1)$ and $\verb"row"\,(-2)$ of $\T$ are not empty, $\Psi^\mathrm{-DU}(\T)\in\mathcal{D}_{m}^{-D}$.

We now define a map $\Theta^\mathrm{-DU}:\mathcal{D}_{m}^{-D}\rightarrow\mathcal{A}_{m-2}^\mathrm{-DU}(2m-2)$ as follows:

\noindent \textbf{[Algorithm for $\Theta^\mathrm{-DU}$]}

\noindent Let $\T\in\mathcal{D}_{m}^{-D}$ and $\T^{+}$ has rows with lengths $\la_1\ge\la_2\ge\ldots\ge\la_r$, where $\la_i \ge 0$ for $i\in[r]$.
Then, $\Theta^\mathrm{-DU}(\T)$ is obtained by the following steps: (See Figure \ref{fig:algndu}.)
\begin{itemize}
  \item Insert $(m-r-\la_1 -2)$ empty rows between $\verb"row"\,(-1)$ and the first row of $\T^{+}$, whose lengths are $m-r-2,m-r-3,\ldots,\la_1 +1$.
  \item If $\la_1>0$, then for every $i\in[r-1]$, insert $(\la_i - \la_{i+1}+1)$ empty rows between the $i$th and $(i+1)$st rows of $\T^{+}$, whose lengths are $\la_i , \la_i , \la_i -1 , \la_i -2 , \ldots, \la_{i+1}+1$.
  \item If $\la_1>0$, then insert $(\la_r +1)$ empty rows after the $r$th row of $\T^{+}$, whose lengths are $\la_r , \la_r , \la_r -1 , \la_r -2 , \ldots, 2, 1$.
\end{itemize}

\begin{figure}[!ht]
  \begin{tikzpicture}
    \tikzstyle{Element} = [draw, minimum width=4mm, minimum height=4mm, inner sep=0pt]
      \node at (3.2,4.4) {$\T=$};
      \node at (10.3,4.4) {$=\Theta^{\mathrm{-DU}}(\T)$};
      \scriptsize
      \node at (4.6,6) {8};
      \node at (5,6) {6};
      \node at (5.4,6) {3};
      \node at (5.8,6) {2};
      \node at (6.2,6) {1};
      \node [Element] [label=left:-8] at (4.6,5.6) {0};
      \node [Element] [label=left:-6] at (4.6,5.2) {1};
      \node [Element] [label=left:-3] at (4.6,4.8) {0};
      \node [Element] [label=left:-2] at (4.6,4.4) {0};
      \node [Element] [label=left:-1] at (4.6,4.0) {0};
      \node [Element] [label=left:4] at (4.6,3.6) {1};
      \node [Element] [label=left:5] at (4.6,3.2) {0};
      \node [Element] [label=left:7] at (4.6,2.8) {1};

      \node [Element] at (5.0,5.2) {1};
      \node [Element] at (5.0,4.8) {0};
      \node [Element] at (5.0,4.4) {1};
      \node [Element] at (5.0,4.0) {0};
      \node [Element] at (5.0,3.6) {1};
      \node [Element] at (5.0,3.2) {1};

      \node [Element] at (5.4,4.8) {0};
      \node [Element] at (5.4,4.4) {0};
      \node [Element] at (5.4,4.0) {1};

      \node [Element] at (5.8,4.4) {1};
      \node [Element] at (5.8,4.0) {1};

      \node [Element] at (6.2,4.0) {1};
      \normalsize
      \node at (6.8,4.4) {$\longrightarrow$};
      \tiny
    \tikzstyle{Element} = [draw, minimum width=3mm, minimum height=3mm, inner sep=0pt]
      \node at (7.8,6.4) {14};
      \node at (8.1,6.4) {10};
      \node at (8.4,6.4) {4};
      \node at (8.7,6.4) {2};
      \node at (9.0,6.4) {1};
      \node[Element] [label=left:-14] at (7.8,6.1) {0};
      \node[Element] [label=left:-10] at (7.8,5.8) {1};
      \node[Element] [label=left:-4] at (7.8,5.5) {0};
      \node[Element] [label=left:-2] at (7.8,5.2) {0};
      \node[Element] [label=left:-1] at (7.8,4.9) {0};
      \node[Element] [label=left:3] at (7.8,4.6) {\textcolor{red}{0}};
      \node[Element] [label=left:5] at (7.8,4.3) {1};
      \node[Element] [label=left:6] at (7.8,4.0) {\textcolor{red}{0}};
      \node[Element] [label=left:7] at (7.8,3.7) {0};
      \node[Element] [label=left:8] at (7.8,3.4) {\textcolor{red}{0}};
      \node[Element] [label=left:9] at (7.8,3.1) {\textcolor{red}{0}};
      \node[Element] [label=left:11] at (7.8,2.8) {1};
      \node[Element] [label=left:12] at (7.8,2.5) {\textcolor{red}{0}};
      \node[Element] [label=left:13] at (7.8,2.2) {\textcolor{red}{0}};

      \node[Element] at (8.1,5.8) {1};
      \node[Element] at (8.1,5.5) {0};
      \node[Element] at (8.1,5.2) {1};
      \node[Element] at (8.1,4.9) {0};
      \node[Element] at (8.1,4.6) {\textcolor{red}{0}};
      \node[Element] at (8.1,4.3) {1};
      \node[Element] at (8.1,4.0) {\textcolor{red}{0}};
      \node[Element] at (8.1,3.7) {1};
      \node[Element] at (8.1,3.4) {\textcolor{red}{0}};
      \node[Element] at (8.1,3.1) {\textcolor{red}{0}};

      \node[Element] at (8.4,5.5) {0};
      \node[Element] at (8.4,5.2) {0};
      \node[Element] at (8.4,4.9) {1};
      \node[Element] at (8.4,4.6) {\textcolor{red}{0}};

      \node[Element] at (8.7,5.2) {1};
      \node[Element] at (8.7,4.9) {1};

      \node[Element] at (9.0,4.9) {1};

  \end{tikzpicture}\normalsize
  \caption{Algorithm for $\Theta^\mathrm{-DU}$; $m=8$, $r=3$, and $(\la_1,\la_2,\la_3)=(2,2,1)$}\label{fig:algndu}
\end{figure}

\begin{proof}[Proof of well-definedness of $\Theta^\mathrm{-DU}$]
  We claim that the map $\Theta^\mathrm{-DU}:\mathcal{D}_{m}^{-D}\rightarrow\mathcal{A}_{m-2}^\mathrm{-DU}(2m-2)$ is well-defined.
  Let $\T'=\Theta^\mathrm{-DU}(\T)$ and $\pi=\zeta(\T)$.
  It is clear that $0>\pi(1)>\pi(2)$ from the first insertion.

  We show that $\pi$ has $m-2$ fixed points.
  Let $t$ be the total number of empty rows that are inserted through Algorithm.
  If $\la_1=0$, then $r=0$ hence $t=m-2$.
  If $\la_1>0$, then
  $$t=(m-r-\la_1 -2)+(\la_1 -\la_2 +1)+\cdots+(\la_{r-1}-\la_{r}+1)+(\la_r +1) = m-2.$$

  Finally, we prove that $\pi$ is an alternating signed permutation.

  If the first insertion does not occur, then $\la_1=m-r-2$ hence $\T^{+}$ has $\verb"row"\,3$ and $\verb"row"\,3$ is nonempty.
  Then, $(\T')^{+}$ also has $\verb"row"\,3$ and $\verb"row"\,3$ is nonempty.
  Since $\pi(3)>0$, $\pi(2)<\pi(3)$.
  If the first insertion occurs, then $\T'$ has an empty row with length $m-r-2$ and this row must have the labeling 3.
  Thus, $\pi(2)<\pi(3)$.
  Moreover, since the length of each empty row of $\T'$ from the first insertion decreases by one,
  we have from Lemma \ref{consecrows} that
  $$\pi(3)>\pi(4)<\cdots>\pi(x-1)<\pi(x),$$
  where $\verb"row"\,x$ is the first nonempty row of $(\T')^{+}$.

  Assume that the second insertion occurs.
  If $\T^{+}$ has consecutive rows with the same length, then there is an empty row between them with the same length in $\T'$, namely, $\verb"row"\,x$.
  Then, by Lemma \ref{consecrows}, we have $\pi(x-1)>\pi(x)<\pi(x+1)$.
  If $\T^{+}$ has consecutive rows with different lengths $s_1$ and $s_2$, then there are empty rows between them with lengths $s_1, s_1, s_1 -1, s_1 -2, \ldots, s_2 +1$ in $\T'$.
  Let $\verb"row"\,x$ and $\verb"row"\,y$ be the nonempty rows with lengths $s_1$ and $s_2$ in $\T'$, respectively.
  Then, by Lemma \ref{consecrows},
  $$\pi(x)>\pi(x+1)<\pi(x+2)>\pi(x+3)<\pi(x+4)>\cdots<\pi(y-2)>\pi(y-1)<\pi(y).$$

  We now assume that the third insertion occurs.
  Let $\verb"row"\,x$ be the nonempty row with length $\la_r$ in $\T'$ from the downmost row of $\T$.
  Then, it is clear from \ref{consecrows} that
  $$\pi(x)>\pi(x+1)<\pi(x+2)>\cdots<\pi(2m-3)>\pi(2m-2).$$

  Therefore, $\pi$ is an alternating signed permutation of type $\mathrm{-DU}$ with $m-2$ fixed points,
  hence, $\Theta^\mathrm{-DU}(\T)\in\mathcal{A}_{m-2}^\mathrm{-DU}(2m-2)$.
\end{proof}

We claim that $\Psi^\mathrm{-DU}$ gives a bijection from $\mathcal{A}_{m-2}^\mathrm{-DU}(2m-2)$ to $\mathcal{D}_{m}^{-D}$, whose inverse is $\Theta^\mathrm{-DU}$.
It is easy to see that $\Psi^\mathrm{-DU}\circ\Theta^\mathrm{-DU}$ is the identity on $\mathcal{D}_m^{-D}$, and therefore $\Psi^\mathrm{-DU}$ is a surjection.
Thus, if we prove that $\Psi^\mathrm{-DU}$ is an injection, then the proof will be completed.

\begin{proof}[Proof of injectivity of $\Psi^\mathrm{-DU}$]
  Suppose we are trying to insert $m-2$ empty rows into $\T\in\mathcal{D}_{m}^{-D}$
  to get $\T'\in\mathcal{A}_{m-2}^\mathrm{+DU}(2m-2)$.
  To prove that $\Psi^\mathrm{-DU}$ is an injection, we show that there is no other way of insertion than the one given in Algorithm.

  Since $\T'$ has $\verb"row"\,(-1)$ and $\verb"row"\,(-2)$, we never insert empty rows with lengths $m-r$ and $m-r-1$.
  To make the number of empty rows be maximal, we must insert an empty row with length $m-r-2$.
  This row must have a labeling 3, hence $\T'$ has $\verb"row"\,3$.
  Since 3 is not even, by Corollary \ref{cor:RCLW}, we cannot insert more empty rows with length $m-r-2$ below $\verb"row"\,3$.
  So, we insert empty rows with lengths $m-r-2, m-r-3,\ldots$ above $\T^{+}$.

  Suppose that we can insert an empty row with length $\la_1$ just above the nonempty row with length $\la_1$ of $\T^{+}$, and let the empty row with length $\la_1$ have labeling $i$ in $\T'$.
  Then, $i-1$ is a labeling of a column and $i+1$ is a labeling of a nonempty row of length $\la_1$, hence $\sigma(i-1)<\sigma(i)<\sigma(i+1)$.
  This is a contradiction since $\sigma$ is alternating.
  Thus, the maximum number of empty rows that we can insert above $\T^{+}$ is $(m-r-\la_1 -2)$ and the lengths of the empty rows are $m-r-2, m-r-3, \ldots,\la_1 +1$.

  Same argument works for the remaining insertions, and this proves that $\Psi^\mathrm{-DU}$ is an injection.
  Thus, we are done for type $\mathrm{-DU}$.
\end{proof}

\begin{example}
  Let $\T\in\mathcal{D}_{8}^{B}$ be the permutation tableau of type $B$ with $(\la_1,\la_2,\la_3)=(2,2,1)$ in Figure \ref{fig:algndu}.
  Applying Algorithm for $\Theta^\mathrm{-DU}$ to $\T$ gives us the permutation tableau of type $B$ corresponding an alternating signed permutation $\sigma=-4,-5,3,2,11,6,10,8,9,-7,14,12,13$ of type $\mathrm{-DU}$.
  See Figure \ref{fig:algndu}.
\end{example}

\subsection{Type $\mathrm{-UD}$}

If $n$ is even, then $n$ must be a fixed point, hence, we assume that $n$ is odd for convention.
Let $n=2m-1$ and $\sigma\in\sym_n^B$ be an alternating signed permutation of type $\mathrm{-UD}$.
Then, by Theorem \ref{maxfxalt}, the maximal number of fixed points of $\sigma$ is $m-1$.

Let $\T\in\mathcal{A}_k^\mathrm{-UD}(n)$ or $\T\in\mathcal{D}_n^{-}$.
Then, $\sigma(1)<0$ where $\sigma=\zeta(\T)$, and $\T$ has $\verb"row"\,(-1)$ with 1 in its diagonal by Lemma \ref{lem:bijec}.

We define a map $\Psi^\mathrm{-UD}:\mathcal{A}_{m-1}^\mathrm{-UD}(2m-1)\rightarrow\mathcal{D}_{m}^{-D}$.
For $\T\in\mathcal{A}_{m-1}^\mathrm{-UD}(2m-1)$, $\Psi^\mathrm{-UD}(\T)$ is obtained by deleting all empty rows from $\T$.
It is clear that $\Psi^\mathrm{-UD}(\T)\in\mathcal{D}_{m}^B$.
Moreover, since $\verb"row"\,(-1)$ of $\T$ is not empty, $\Psi^\mathrm{-UD}(\T)\in\mathcal{D}_{m}^{-}$.

We now define a map $\Theta^\mathrm{-UD}:\mathcal{D}_{m}^{-}\rightarrow\mathcal{A}_{m-1}^\mathrm{-UD}(2m-1)$ as follows:

\noindent \textbf{[Algorithm for $\Theta^\mathrm{-UD}$]}

\noindent Let $\T\in\mathcal{D}_{m}^{-}$ and $\T^{+}$ has rows with lengths $\la_1\ge\la_2\ge\ldots\ge\la_r$, where $\la_i \ge 0$ for $i\in[r]$.
Then, $\Theta^\mathrm{-UD}(\T)$ is obtained by the following steps: (See Figure \ref{fig:algnud}.)
\begin{itemize}
  \item Insert $(m-r-\la_1 -1)$ empty rows between $\verb"row"\,(-1)$ and the first row of $\T^{+}$, whose lengths are $m-r-1,m-r-2,\ldots,\la_1 +1$.
  \item If $\la_1>0$, then for every $i\in[r-1]$, insert $(\la_i - \la_{i+1}+1)$ empty rows between the $i$th and $(i+1)$st rows of $\T^{+}$, whose lengths are $\la_i , \la_i , \la_i -1 , \la_i -2 , \ldots, \la_{i+1}+1$.
  \item If $\la_1>0$, then insert $(\la_r +1)$ empty rows after the $k$th row of $\T^{+}$, whose lengths are $\la_r , \la_r , \la_r -1 , \la_r -2 , \ldots, 2, 1$.
\end{itemize}

\begin{example}
  Let $\T\in\mathcal{D}_{7}^{-}$ be the permutation tableau of type $B$ with $(\la_1,\la_2,\la_3)=(2,2,1)$ in Figure \ref{fig:algnud}.
  Applying Algorithm for $\Theta^\mathrm{-UD}$ to $\T$ gives us the permutation tableau of type $B$ corresponding to the alternating signed permutation $\sigma=-4,2,1,10,5,9,7,8,-6,13,11,12,3$ of type $\mathrm{-UD}$.
  See Figure \ref{fig:algnud}.
\end{example}

\begin{figure}[!ht]
  \begin{tikzpicture}
    \tikzstyle{Element} = [draw, minimum width=4mm, minimum height=4mm, inner sep=0pt]
      \node at (3.2,4.4) {$\T=$};
      \node at (10.3,4.4) {$=\Theta^{\mathrm{-UD}}(\T)$};
      \scriptsize
      \node at (4.6,6) {7};
      \node at (5,6) {5};
      \node at (5.4,6) {2};
      \node at (5.8,6) {1};
      \node [Element] [label=left:-7] at (4.6,5.6) {0};
      \node [Element] [label=left:-5] at (4.6,5.2) {1};
      \node [Element] [label=left:-2] at (4.6,4.8) {0};
      \node [Element] [label=left:-1] at (4.6,4.4) {0};
      \node [Element] [label=left:3] at (4.6,4.0) {1};
      \node [Element] [label=left:4] at (4.6,3.6) {0};
      \node [Element] [label=left:6] at (4.6,3.2) {1};

      \node [Element] at (5.0,5.2) {1};
      \node [Element] at (5.0,4.8) {0};
      \node [Element] at (5.0,4.4) {1};
      \node [Element] at (5.0,4.0) {1};
      \node [Element] at (5.0,3.6) {1};

      \node [Element] at (5.4,4.8) {0};
      \node [Element] at (5.4,4.4) {1};

      \node [Element] at (5.8,4.4) {1};
      \normalsize
      \node at (6.6,4.4) {$\longrightarrow$};
      \tiny
    \tikzstyle{Element} = [draw, minimum width=3mm, minimum height=3mm, inner sep=0pt]
      \node at (7.8,6.4) {13};
      \node at (8.1,6.4) {9};
      \node at (8.4,6.4) {3};
      \node at (8.7,6.4) {1};
      \node[Element] [label=left:-13] at (7.8,6.1) {0};
      \node[Element] [label=left:-9] at (7.8,5.8) {1};
      \node[Element] [label=left:-3] at (7.8,5.5) {0};
      \node[Element] [label=left:-1] at (7.8,5.2) {0};
      \node[Element] [label=left:2] at (7.8,4.9) {\textcolor{red}{0}};
      \node[Element] [label=left:4] at (7.8,4.6) {1};
      \node[Element] [label=left:5] at (7.8,4.3) {\textcolor{red}{0}};
      \node[Element] [label=left:6] at (7.8,4.0) {0};
      \node[Element] [label=left:7] at (7.8,3.7) {\textcolor{red}{0}};
      \node[Element] [label=left:8] at (7.8,3.4) {\textcolor{red}{0}};
      \node[Element] [label=left:10] at (7.8,3.1) {1};
      \node[Element] [label=left:11] at (7.8,2.8) {\textcolor{red}{0}};
      \node[Element] [label=left:12] at (7.8,2.5) {\textcolor{red}{0}};

      \node[Element] at (8.1,5.8) {1};
      \node[Element] at (8.1,5.5) {0};
      \node[Element] at (8.1,5.2) {1};
      \node[Element] at (8.1,4.9) {\textcolor{red}{0}};
      \node[Element] at (8.1,4.6) {1};
      \node[Element] at (8.1,4.3) {\textcolor{red}{0}};
      \node[Element] at (8.1,4.0) {1};
      \node[Element] at (8.1,3.7) {\textcolor{red}{0}};
      \node[Element] at (8.1,3.4) {\textcolor{red}{0}};

      \node[Element] at (8.4,5.5) {0};
      \node[Element] at (8.4,5.2) {1};
      \node[Element] at (8.4,4.9) {\textcolor{red}{0}};

      \node[Element] at (8.7,5.2) {1};

  \end{tikzpicture}\normalsize
  \caption{Algorithm for $\Theta^\mathrm{-UD}$; $m=7$, $r=3$, and $(\la_1,\la_2,\la_3)=(2,2,1)$}\label{fig:algnud}
\end{figure}

\subsection{Type $\mathrm{+DU}$}

If $n$ is odd then $n$ must be a fixed point. Hence we assume that $n$ is even for convention.
Let $n=2m$ and $\sigma\in\sym_n^B$ be an alternating signed permutation of type $\mathrm{+DU}$.
Then, the maximal number of fixed points is $m$ by Theorem \ref{maxfxalt}.

Let $\T\in\mathcal{A}_{k}^\mathrm{+DU}(n)$.
Then, $\sigma(1)>0$ where $\sigma=\zeta(\T)$, and $\T$ has $\verb"row"\,i$ by Lemma \ref{lem:bijec}.
In order that $\sigma$ has the maximal number of fixed points, 1 must be fixed point of $\sigma$.
Hence, $\verb"row"\,1$ of $\T$ is empty.

We define a map $\Psi^\mathrm{+DU}:\mathcal{A}_{m}^\mathrm{+DU}(2m)\rightarrow\mathcal{D}_{m}^{B}$.
For $\T\in\mathcal{A}_{m}^\mathrm{+DU}(2m)$, $\Psi^\mathrm{+DU}(\T)$ is obtained by deleting all empty rows from $\T$.
Since $\verb"row"\,1$ of $\T$ is deleted, $\Psi^\mathrm{+DU}$ can have a row labeling either $1$ or $-1$.
Thus, $\Psi^\mathrm{+DU}(\T)\in\mathcal{D}_m^B$.

We now define a map $\Theta^\mathrm{+DU}:\mathcal{D}_{m}^{B}\rightarrow\mathcal{A}_{m}^\mathrm{+DU}(2m)$ as follows:

\noindent \textbf{[Algorithm for $\Theta^\mathrm{+DU}$]}

\noindent Let $\T\in\mathcal{D}_{m}^{B}$ and $\T^{+}$ has rows with lengths $\la_1\ge\la_2\ge\ldots\ge\la_r$, where $\la_i \ge 0$ for $i\in[r]$.
Then, $\Theta^\mathrm{+DU}(\T)$ is obtained by the following steps:
\begin{itemize}
  \item If $\T$ has $\verb"row"\,(-1)$, that is $\la_1 < m-r$, then insert $(m-r-\la_1)$ empty rows between $\verb"row"\,(-1)$ and the first row of $\T^{+}$, whose lengths are $m-r,m-r-1,\ldots,\la_1 +1$ from the top.
  \item If $\la_1>0$, then for every $i\in[r-1]$, insert $(\la_i - \la_{i+1}+1)$ empty rows between the $i$th and $(i+1)$st rows of $\T^{+}$, whose lengths are $\la_i , \la_i , \la_i -1 , \la_i -2 , \ldots, \la_{i+1}+1$.
  \item If $\la_1>0$, then insert $(\la_r +1)$ empty rows after the $k$th row of $\T^{+}$, whose lengths are $\la_r , \la_r , \la_r -1 , \la_r -2 , \ldots, 2, 1$.
\end{itemize}

\begin{example}
Let $\T\in\mathcal{D}_{7}^{B}$ be the permutation tableau of type $B$ with $(\la_1,\la_2,\la_3)=(2,2,1)$ in Figure \ref{fig:algpdu}.
Applying Algorithm for $\Theta^\mathrm{+DU}$ to $\T$ gives us the permutation tableau of type $B$ corresponding to the alternating signed permutation $\sigma=1,-5,3,2,11,6,10,8,9,-7,14,12,13,4$ of type $\mathrm{+DU}$.
See Figure \ref{fig:algpdu}.
\end{example}

\begin{figure}[!ht]
  \begin{tikzpicture}
    \tikzstyle{Element} = [draw, minimum width=4mm, minimum height=4mm, inner sep=0pt]
      \node at (3.6,4.4) {$\T=$};
      \node at (10.5,4.4) {$=\Theta^{\mathrm{+DU}}(\T)$};
      \scriptsize
      \node at (5,6) {7};
      \node at (5.4,6) {5};
      \node at (5.8,6) {2};
      \node at (6.2,6) {1};
      \node [Element] [label=left:-7] at (5.0,5.6) {0};
      \node [Element] [label=left:-5] at (5.0,5.2) {1};
      \node [Element] [label=left:-2] at (5.0,4.8) {0};
      \node [Element] [label=left:-1] at (5.0,4.4) {0};
      \node [Element] [label=left:3] at (5.0,4.0) {1};
      \node [Element] [label=left:4] at (5.0,3.6) {0};
      \node [Element] [label=left:6] at (5.0,3.2) {1};

      \node [Element] at (5.4,5.2) {1};
      \node [Element] at (5.4,4.8) {0};
      \node [Element] at (5.4,4.4) {1};
      \node [Element] at (5.4,4.0) {1};
      \node [Element] at (5.4,3.6) {1};

      \node [Element] at (5.8,4.8) {0};
      \node [Element] at (5.8,4.4) {1};

      \node [Element] at (6.2,4.4) {1};
      \normalsize
      \node at (7,4.4) {$\longrightarrow$};
      \tiny
    \tikzstyle{Element} = [draw, minimum width=3mm, minimum height=3mm, inner sep=0pt]
      \node at (8.2,6.7) {14};
      \node at (8.5,6.7) {10};
      \node at (8.8,6.7) {4};
      \node at (9.1,6.7) {2};
      \node[Element] [label=left:-14] at (8.2,6.4) {0};
      \node[Element] [label=left:-10] at (8.2,6.1) {1};
      \node[Element] [label=left:-4] at (8.2,5.8) {0};
      \node[Element] [label=left:-2] at (8.2,5.5) {0};
      \node[Element] [label=left:1] at (8.2,5.2) {\textcolor{red}{0}};
      \node[Element] [label=left:3] at (8.2,4.9) {\textcolor{red}{0}};
      \node[Element] [label=left:5] at (8.2,4.6) {1};
      \node[Element] [label=left:6] at (8.2,4.3) {\textcolor{red}{0}};
      \node[Element] [label=left:7] at (8.2,4.0) {0};
      \node[Element] [label=left:8] at (8.2,3.7) {\textcolor{red}{0}};
      \node[Element] [label=left:9] at (8.2,3.4) {\textcolor{red}{0}};
      \node[Element] [label=left:11] at (8.2,3.1) {1};
      \node[Element] [label=left:12] at (8.2,2.8) {\textcolor{red}{0}};
      \node[Element] [label=left:13] at (8.2,2.5) {\textcolor{red}{0}};

      \node[Element] at (8.5,6.1) {1};
      \node[Element] at (8.5,5.8) {0};
      \node[Element] at (8.5,5.5) {1};
      \node[Element] at (8.5,5.2) {\textcolor{red}{0}};
      \node[Element] at (8.5,4.9) {\textcolor{red}{0}};
      \node[Element] at (8.5,4.6) {1};
      \node[Element] at (8.5,4.3) {\textcolor{red}{0}};
      \node[Element] at (8.5,4.0) {1};
      \node[Element] at (8.5,3.7) {\textcolor{red}{0}};
      \node[Element] at (8.5,3.4) {\textcolor{red}{0}};

      \node[Element] at (8.8,5.8) {0};
      \node[Element] at (8.8,5.5) {1};
      \node[Element] at (8.8,5.2) {\textcolor{red}{0}};
      \node[Element] at (8.8,4.9) {\textcolor{red}{0}};

      \node[Element] at (9.1,5.5) {1};
      \node[Element] at (9.1,5.2) {\textcolor{red}{0}};

  \end{tikzpicture}\normalsize
  \caption{Algorithm for $\Theta^\mathrm{+DU}$; $m=7$, $r=3$, and $(\la_1,\la_2,\la_3)=(2,2,1)$}\label{fig:algpdu}
\end{figure}

\subsection{Type $\mathrm{+UD}$}

If $n$ is even, then $n$ must be a fixed point, hence, we assume that $n$ is odd for convention.
Let $n=2m+1$ and $\sigma\in\sym_n^B$ be an alternating signed permutation of type $\mathrm{+UD}$.
Then, by Theorem \ref{maxfxalt}, the maximal number of fixed points is $m+1$.

Let $\T\in\mathcal{A}_{k}^\mathrm{+UD}(n)$.
Then, $0<\sigma(1)<\sigma(2)$ where $\sigma=\zeta(\T)$, and $\T$ has $\verb"row"\,1$ and $\verb"row"\,2$ by Lemma \ref{lem:bijec}.
For $\sigma$, to have the maximal number of fixed points, both 1 and 2 must be fixed points.
Hence, $\verb"row"\,1$ and $\verb"row"\,2$ of $\T$ are empty.

We define a map $\Psi^\mathrm{+UD}:\mathcal{A}_{m+1}^\mathrm{+UD}(2m+1)\rightarrow\mathcal{D}_m^B$.
For $\T\in\mathcal{A}_{m+1}^\mathrm{+UD}(2m+1)$, $\Psi^\mathrm{+UD}(\T)$ is obtained by deleting all empty rows from $\T$.
Since $\verb"row"\,1$ and $\verb"row"\,2$ of $\T$ are deleted, $\Psi^\mathrm{+UD}(\T)$ can have the a labeling either $1$ or $-1$.
Thus, $\Psi^\mathrm{+UD}(\T)\in\mathcal{D}_m^B$.

We now define a map $\Theta^\mathrm{+UD}:\mathcal{D}_{m}^{B}\rightarrow\mathcal{A}_{m+1}^\mathrm{+UD}(2m+1)$ as follows:

\noindent \textbf{[Algorithm for $\Theta^\mathrm{+UD}$]}

\noindent Let $\T\in\mathcal{D}_{m}^{B}$ and $\T^{+}$ has rows with lengths $\la_1\ge\la_2\ge\ldots\ge\la_r$, where $\la_i \ge 0$ for $i\in[r]$.
Then, $\Theta^\mathrm{+UD}(\T)$ is obtained by the following steps: (See Figure \ref{fig:algpud}.)
\begin{itemize}
  \item If $\T$ has $\verb"row"\,(-1)$, that is $\la_1 < m-r$, then insert $(m-r-\la_1 +1)$ empty rows between $\verb"row"\,(-1)$ and the first row of $\T^{+}$, whose lengths are $m-r,m-r,m-r-1,m-r-2,\ldots,\la_1 +1$.
  \item If $\la_1>0$, then for every $i\in[r-1]$, insert $(\la_i - \la_{i+1}+1)$ empty rows between the $i$th and $(i+1)$st rows of $\T^{+}$, whose lengths are $\la_i , \la_i , \la_i -1 , \la_i -2 , \ldots, \la_{i+1}+1$.
  \item If $\la_1>0$, then insert $(\la_r +1)$ empty rows after the $k$th row of $\T^{+}$, whose lengths are $\la_r , \la_r , \la_r -1 , \la_r -2 , \ldots, 2, 1$.
\end{itemize}

\begin{example}
Let $\T\in\mathcal{D}_{7}^{B}$ be the permutation tableau of type $B$ with $(\la_1,\la_2,\la_3)=(2,2,1)$ in Figure \ref{fig:algpdu}.
Applying Algorithm for $\Theta^\mathrm{+UD}$ to $\T$ gives us the permutation tableau of type $B$ corresponding to the alternating signed permutation $\sigma=1,2,-6,4,3,12,7,11,9,10,-8,15,13,14,5$ of type $\mathrm{+UD}$.
See Figure \ref{fig:algpdu}.
\end{example}

\begin{figure}[!ht]
  \begin{tikzpicture}
    \tikzstyle{Element} = [draw, minimum width=4mm, minimum height=4mm, inner sep=0pt]
    \node at (3.2,4.4) {$\T=$};
      \node at (10.3,4.4) {$=\Theta^{\mathrm{+UD}}(\T)$};
      \scriptsize
      \node at (4.6,6) {7};
      \node at (5,6) {5};
      \node at (5.4,6) {2};
      \node at (5.8,6) {1};
      \node [Element] [label=left:-7] at (4.6,5.6) {0};
      \node [Element] [label=left:-5] at (4.6,5.2) {1};
      \node [Element] [label=left:-2] at (4.6,4.8) {0};
      \node [Element] [label=left:-1] at (4.6,4.4) {0};
      \node [Element] [label=left:3] at (4.6,4.0) {1};
      \node [Element] [label=left:4] at (4.6,3.6) {0};
      \node [Element] [label=left:6] at (4.6,3.2) {1};

      \node [Element] at (5.0,5.2) {1};
      \node [Element] at (5.0,4.8) {0};
      \node [Element] at (5.0,4.4) {1};
      \node [Element] at (5.0,4.0) {1};
      \node [Element] at (5.0,3.6) {1};

      \node [Element] at (5.4,4.8) {0};
      \node [Element] at (5.4,4.4) {1};

      \node [Element] at (5.8,4.4) {1};
      \normalsize
      \node at (6.6,4.4) {$\longrightarrow$};
      \tiny
    \tikzstyle{Element} = [draw, minimum width=3mm, minimum height=3mm, inner sep=0pt]
      \node at (7.8,7) {15};
      \node at (8.1,7) {11};
      \node at (8.4,7) {5};
      \node at (8.7,7) {3};
      \node[Element] [label=left:-15] at (7.8,6.7) {0};
      \node[Element] [label=left:-11] at (7.8,6.4) {1};
      \node[Element] [label=left:-5] at (7.8,6.1) {0};
      \node[Element] [label=left:-3] at (7.8,5.8) {0};
      \node[Element] [label=left:1] at (7.8,5.5) {\textcolor{red}{0}};
      \node[Element] [label=left:2] at (7.8,5.2) {\textcolor{red}{0}};
      \node[Element] [label=left:4] at (7.8,4.9) {\textcolor{red}{0}};
      \node[Element] [label=left:6] at (7.8,4.6) {1};
      \node[Element] [label=left:7] at (7.8,4.3) {\textcolor{red}{0}};
      \node[Element] [label=left:8] at (7.8,4.0) {0};
      \node[Element] [label=left:9] at (7.8,3.7) {\textcolor{red}{0}};
      \node[Element] [label=left:10] at (7.8,3.4) {\textcolor{red}{0}};
      \node[Element] [label=left:12] at (7.8,3.1) {1};
      \node[Element] [label=left:13] at (7.8,2.8) {\textcolor{red}{0}};
      \node[Element] [label=left:14] at (7.8,2.5) {\textcolor{red}{0}};

      \node[Element] at (8.1,6.4) {1};
      \node[Element] at (8.1,6.1) {0};
      \node[Element] at (8.1,5.8) {1};
      \node[Element] at (8.1,5.5) {\textcolor{red}{0}};
      \node[Element] at (8.1,5.2) {\textcolor{red}{0}};
      \node[Element] at (8.1,4.9) {\textcolor{red}{0}};
      \node[Element] at (8.1,4.6) {1};
      \node[Element] at (8.1,4.3) {\textcolor{red}{0}};
      \node[Element] at (8.1,4.0) {1};
      \node[Element] at (8.1,3.7) {\textcolor{red}{0}};
      \node[Element] at (8.1,3.4) {\textcolor{red}{0}};

      \node[Element] at (8.4,6.1) {0};
      \node[Element] at (8.4,5.8) {1};
      \node[Element] at (8.4,5.5) {\textcolor{red}{0}};
      \node[Element] at (8.4,5.2) {\textcolor{red}{0}};
      \node[Element] at (8.4,4.9) {\textcolor{red}{0}};

      \node[Element] at (8.7,5.8) {1};
      \node[Element] at (8.7,5.5) {\textcolor{red}{0}};
      \node[Element] at (8.7,5.2) {\textcolor{red}{0}};

  \end{tikzpicture}\normalsize
  \caption{Algorithm for $\Theta^\mathrm{+UD}$; $m=7$, $r=3$, and $(\la_1,\la_2,\la_3)=(2,2,1)$}\label{fig:algpud}
\end{figure}

\section*{Acknowledgement}
The author is grateful to Soojin Cho for valuable suggestions and helpful discussions.

\bibliographystyle{amsplain}
\bibliography{PTB}

\end{document}